\documentclass{amsart}

\usepackage{graphicx}
\usepackage{amssymb}

\usepackage[OT2,T1]{fontenc}

\usepackage{hyperref}

\usepackage{color}

\usepackage{ulem} \normalem

\vfuzz2pt 
\hfuzz2pt 
\newtheorem{thm}{Theorem}[section]
\newtheorem{cor}[thm]{Corollary}
\newtheorem{lem}[thm]{Lemma}
\newtheorem{prop}[thm]{Proposition}
\newtheorem*{obser}{Observation}
\newtheorem{problem}[thm]{Problem}
\theoremstyle{definition}
\newtheorem{defn}[thm]{Definition}

\newtheorem{ex}[thm]{Example}
\theoremstyle{remark}
\newtheorem{rem}[thm]{Remark}


\newcommand{\n}[1]{\left\Vert#1\right\Vert}

\newcommand{\K}{\mathbb{K}}
\newcommand{\R}{\mathbb{R}}
\newcommand{\C}{\mathbb{C}}

\newcommand{\Marku}{Markushevich}

\DeclareMathOperator{\spann}{span}
\DeclareMathOperator{\ran}{ran}

\DeclareMathOperator{\dens}{dens}

\begin{document}
\renewcommand{\proofname}{\scshape Proof}

\title[Totally smooth renormings]{Totally smooth renormings}%
\author{Eve Oja,  Tauri Viil, and Dirk Werner}%
\address{}%
\email{}%

\address[Oja]{Institute of Mathematics and Statistics, University of Tartu, J.~Liivi~2, 50409~Tartu, Estonia, and Estonian Academy of Sciences, Kohtu~6, 10130~Tallinn, Estonia}

\email{eve.oja@ut.ee}

\address[Viil]{Institute of Mathematics and Statistics, University of Tartu, J.~Liivi~2, 50409~Tartu, Estonia}

\email{tauriv@ut.ee}

\address[Werner]{Department of Mathematics \\ Freie Universit\"at Berlin \\ 
Arnimallee~6 \\ \newline 14195~Berlin \\ Germany\newline
\href{http://orcid.org/0000-0003-0386-9652}{ORCID: \texttt{0000-0003-0386-9652}}
}
\email{werner@math.fu-berlin.de}

\thanks{}%
\subjclass[2010]{46B03, 46B04, 46B20}%
\keywords{Totally smooth Banach space, property $U$, $(a,B,c)$-ideals, renormings}%

\date{17.7.2018}%

\begin{abstract}
  We study the problem of totally smooth renormings of Banach spaces and provide such renormings for spaces which are weakly compactly generated. We also consider renormings for $(a,B,c)$-ideals. 
\end{abstract}
\maketitle

\section{Introduction}
Let $X$ be a Banach space. Following Phelps \cite{P}, we say that a subspace $X$ of a Banach space $Z$ has \emph{property~$U$} in $Z$ if every functional $x^* \in X^*$ has a unique norm-preserving extension $z^* \in Z^*$. Following Liao and Wong \cite{LW}, we say that $X$ is \emph{totally smooth} in $Z$ if every closed subspace $Y$ of $X$ has property~$U$ in $Z$. If $Z=X^{**}$, then we say that $X$ has property~$U$ in its bidual or, respectively, that $X$ is totally smooth in its bidual.
Banach spaces with property~$U$ in their biduals are also known as \textit{Hahn-Banach smooth} spaces \cite{S}.

The notion of total smoothness in the bidual was essentially considered already in 1977 by Sullivan \cite{S}.
It was further studied in \cite{OPV}, where several geometrical conditions equivalent to total smoothness were proved.

Let $\pi_X \colon X^{***} \to  X^{***}$ denote the natural projection onto the dual space $X^*$. It is known (see \cite{O2} or, e.g., \cite[p.~21]{O4}) that $X$ has property~$U$ in its bidual $X^{**}$ if and only if $X$ has the strong uniqueness property $SU$ in $X^{**}$, meaning that the following condition holds: for $x^{***}\in X^{***}$,
\[
\|\pi_X x^{***}\| = \|x^{***}\| \quad\Rightarrow \quad \pi_X x^{***} = x^{***}.
\]
 
A Banach space $X$ is called an \emph{$M$-ideal} in its bidual if the equality
\[
\n{x^{***}}=\n{\pi_X x^{***}}+\n{x^{***}-\pi_X x^{***}}
\]
holds for every $x^{***} \in X^{***}$. 
The notion of $M$-ideals was introduced by Alfsen and Effros in \cite{AE} and has since then been studied by many authors (see, e.g., the monograph \cite{HWW} for results and references).


The motivation for this paper comes from the following observation in \cite[Remark~2.8]{OPV}, which is based on \cite[Theorem~III.4.6]{HWW}.

\begin{obser}\label{observ}
If $X$ is an $M$-ideal in its bidual $X^{**}$, then $X$ admits an equivalent norm under which $X$ is totally smooth and is still an $M$-ideal in its bidual. 
\end{obser}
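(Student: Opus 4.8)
The plan is to split the claim into a borrowed hard part and a routine soft part. For the hard part I would simply invoke \cite[Theorem~III.4.6]{HWW}: since $X$ is an $M$-ideal in $X^{**}$, it carries an equivalent norm under which it is still an $M$-ideal in $X^{**}$ and, moreover, $X^*$ is strictly convex. It then remains to prove that every Banach space which is an $M$-ideal in its bidual and has strictly convex dual is totally smooth in its bidual; applying this to the renormed space finishes the proof.

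For the soft part I would argue as follows. The $M$-ideal hypothesis says precisely that $\pi_X$ is an $L$-projection, so $X^{***}=X^*\oplus_1 X^\perp$ isometrically, where $X^\perp:=\ker\pi_X$ is the annihilator in $X^{***}$ of the canonical copy of $X$. Let $Y\subseteq X$ be a closed subspace and let $Y^\perp\subseteq X^*$ be its annihilator in $X^*$. A short computation shows that the annihilator of $Y$ inside $X^{***}$ (with $Y\subseteq X^{**}$) equals $Y^\perp+X^\perp$, which by the $L$-decomposition is the isometric $\ell_1$-sum $Y^\perp\oplus_1 X^\perp$. By the standard nearest-point description of property~$U$ going back to Phelps \cite{P}, $Y$ has property~$U$ in $X^{**}$ if and only if this subspace is Chebyshev in $X^{***}$. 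But for $\Phi=x^*+x^\perp$ (with $x^*\in X^*$, $x^\perp\in X^\perp$) one has $\n{\Phi-(y^*+z^\perp)}=\n{x^*-y^*}+\n{x^\perp-z^\perp}$ for $y^*\in Y^\perp$ and $z^\perp\in X^\perp$; hence a nearest point is forced to have $z^\perp=x^\perp$, and the problem reduces to minimising $\n{x^*-y^*}$ over $y^*\in Y^\perp$. Thus $Y^\perp\oplus_1 X^\perp$ is Chebyshev in $X^{***}$ exactly when $Y^\perp$ is Chebyshev in $X^*$.

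Finally, $Y^\perp$ is weak$^*$-closed, hence proximinal in $X^*$ (intersect it with a large closed ball about the point in question to get a weak$^*$-compact set, and use weak$^*$-lower semicontinuity of the norm); and in the strictly convex space $X^*$ a proximinal subspace is automatically Chebyshev, since the set of nearest points to a given point is convex and lies on a sphere about that point. Hence $Y^\perp$ is Chebyshev in $X^*$, so $Y$ has property~$U$ in $X^{**}$; as $Y$ was an arbitrary closed subspace, the renormed $X$ is totally smooth in $X^{**}$, and it is still an $M$-ideal there by the choice of norm.

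The one genuine obstacle is the renorming step itself: one must install strict convexity of $X^*$ while keeping $\pi_X$ an $L$-projection, which is a rigid isometric constraint and is by no means automatic under small perturbations of the norm. This is exactly what \cite[Theorem~III.4.6]{HWW} delivers, and everything after that invocation is elementary; a self-contained route would amount to building such a norm by hand, perturbing the dual norm compatibly with the $\ell_1$-decomposition, but the bookkeeping for the $M$-ideal property is precisely the content of the cited theorem.
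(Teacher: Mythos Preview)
Your proposal is correct. The hard part is handled exactly as in the paper: both invoke \cite[Theorem~III.4.6]{HWW} to obtain an equivalent norm that keeps $X$ an $M$-ideal in $X^{**}$ while making $X^*$ strictly convex.

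For the soft part the routes diverge slightly. The paper simply notes that an $M$-ideal in its bidual has property~$U$ there and then quotes Theorem~\ref{thm: totally smooth} (the Liao--Wong characterization, which in turn rests on the Taylor--Foguel theorem): property~$U$ in $X^{**}$ together with strict convexity of $X^*$ is equivalent to total smoothness in $X^{**}$. You instead give a direct argument tailored to the $M$-ideal situation: using the $L$-decomposition $X^{***}=X^*\oplus_1 X^\perp$, you identify the annihilator of $Y\subseteq X$ in $X^{***}$ as $Y^\perp\oplus_1 X^\perp$, apply Phelps' Chebyshev criterion for property~$U$, and reduce to $Y^\perp$ being Chebyshev in the strictly convex space $X^*$. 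Your computation is correct (in particular, the $\ell_1$-structure forces any nearest point to have second coordinate $x^\perp$, and weak$^*$-closedness gives proximinality of $Y^\perp$). The paper's route is shorter and uses only property~$U$ rather than the full $\ell_1$-decomposition, which is why it generalizes immediately to Problem~\ref{problem: U}; your route is self-contained and shows transparently how the $L$-summand structure does the work in the $M$-ideal case.
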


As, clearly, being an $M$-ideal implies property~$U$, 
the natural question arises whether the $M$-ideal condition in the Observation could be relaxed.

\begin{problem} \label{problem: U}
If $X$ has property~$U$ in its bidual $X^{**}$, then does $X$ admit an equivalent norm under which $X$ is totally smooth in its bidual?
\end{problem}

Let us note at once (see Example~\ref{ex: l_p sum c_0} below) that there exist 
Banach spaces that admit an equivalent norm under which they are totally smooth, but they do not admit any equivalent norm under which they are $M$-ideals in their biduals.

Now, we recall that according to the Taylor--Foguel theorem (see \cite{T} and \cite{Fo}), \textit{every subspace $Y$ of $X$ has property~$U$ in $X$ if and only if the dual space $X^*$ is strictly convex,}  i.e., its unit sphere $S_{X^*}$ contains no non-trivial line segments.

Thus, relying on the Taylor--Foguel theorem, we can characterize totally smooth spaces by the strict convexity of the dual space as follows.

\begin{thm}[see \cite{LW}]\label{thm: totally smooth}
A Banach space $X$ is totally smooth in its bidual $X^{**}$ if and only if $X$ has property~$U$ in $X^{**}$ and the dual space $X^*$ is strictly convex.
\end{thm}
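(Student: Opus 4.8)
The plan is to prove the equivalence by combining the Taylor--Foguel theorem with a simple localization argument. Assume first that $X$ is totally smooth in $X^{**}$, i.e., every closed subspace $Y$ of $X$ has property~$U$ in $X^{**}$. Taking $Y=X$ gives immediately that $X$ has property~$U$ in $X^{**}$. For the strict convexity of $X^*$, the idea is to reduce a potential line segment in $S_{X^*}$ to a failure of property~$U$ for some closed subspace $Y$. Concretely, if $x^*_0,x^*_1\in S_{X^*}$ are distinct with $\frac12(x^*_0+x^*_1)\in S_{X^*}$, one wants to produce a closed subspace $Y$ on which the restrictions of $x^*_0$ and $x^*_1$ coincide and have the same norm as the common restriction; then $x^*_0|_Y=x^*_1|_Y$ is a norm-one functional on $Y$ with two distinct norm-preserving extensions to $X$ (hence, after composing with a Hahn--Banach extension to $X^{**}$, two distinct norm-preserving extensions to $X^{**}$), contradicting property~$U$ of $Y$ in $X^{**}$. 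The natural candidate is $Y=\ker(x^*_0-x^*_1)$, or a suitable closed subspace on which $x^*_0$ and $x^*_1$ agree, and one checks that $\|x^*_i|_Y\|=1$ using that the segment lies on the sphere.

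Conversely, assume $X$ has property~$U$ in $X^{**}$ and $X^*$ is strictly convex. Let $Y$ be a closed subspace of $X$; we must show $Y$ has property~$U$ in $X^{**}$. Take $y^*\in Y^*$ and let $z^*_1,z^*_2\in X^{***}$ be two norm-preserving extensions of $y^*$ to $X^{**}$. By the Taylor--Foguel theorem applied to the pair $Y\subseteq X$ (using that $X^*$ is strictly convex), $y^*$ has a unique norm-preserving extension $x^*\in X^*$; hence $\pi_X z^*_1=\pi_X z^*_2=x^*$ (each $\pi_X z^*_i$ is a norm-preserving extension of $y^*$ to $X$, since restriction to $X$ does not increase norm and cannot decrease it below $\|y^*\|$). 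Now $z^*_1$ and $z^*_2$ are both norm-preserving extensions of $x^*\in X^*$ to $X^{**}$, i.e., $\|\pi_X z^*_i\|=\|z^*_i\|$, so property~$U$ of $X$ in $X^{**}$ — equivalently the $SU$ property recalled in the introduction — forces $z^*_i=\pi_X z^*_i=x^*$. Thus $z^*_1=z^*_2$, proving uniqueness.

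The main obstacle I anticipate is the first direction: showing that strict convexity of $X^*$ is \emph{necessary}, i.e., extracting from a line segment in $S_{X^*}$ a concrete closed subspace $Y$ of $X$ witnessing the failure of property~$U$ in the bidual. One must be careful that the two distinct extensions built on $X$ remain distinct and norm-preserving after pushing up to $X^{**}$ via Hahn--Banach; this is where the hypothesis ``property~$U$ of $Y$ in $X^{**}$'' (rather than merely in $X$) is used, and one should verify the norms match up at each stage. Everything else is bookkeeping with the identifications $X^{***}=X^*\oplus\ker\pi_X$ and the $SU$ reformulation of property~$U$ in the bidual quoted above.
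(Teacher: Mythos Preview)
Your argument is correct and follows exactly the route the paper indicates (the paper gives no proof of its own, merely citing \cite{LW} and the Taylor--Foguel theorem). The ``main obstacle'' you anticipate in the forward direction is not really there: no Hahn--Banach step is needed, because the canonical isometric embedding $X^*\hookrightarrow X^{***}$ already carries the distinct $x_0^*,x_1^*\in S_{X^*}$ to distinct norm-one extensions of $x_0^*|_Y$ in $(X^{**})^*$. In fact the forward direction is shorter than you make it: property~$U$ of $Y$ in $X^{**}$ trivially implies property~$U$ of $Y$ in $X$ (distinct norm-preserving extensions in $X^*$ remain distinct and norm-preserving in $X^{***}$), so one may invoke Taylor--Foguel directly to conclude that $X^*$ is strictly convex, without exhibiting $Y=\ker(x_0^*-x_1^*)$ and verifying $\|x_0^*|_Y\|=1$ by hand.
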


Therefore, Problem~\ref{problem: U} is equivalent to the following problem. 

\begin{problem} \label{problem: U convex}
If $X$ has property~$U$ in its bidual $X^{**}$, then does $X$ admit an equivalent norm under which the dual space $X^*$ is strictly convex and $X$ still has property~$U$ in its bidual?
\end{problem}




Recall that if $X$ is separable and has property $U$ in $X^{**}$, then $X^*$ is 
separable (see Theorem~\ref{Asplund} below). It was observed by Sullivan in \cite[p.~321]{S} that an application of the
Kadets--Klee renorming
theorem solves Problem~\ref{problem: U convex} (and thus also
Problem~\ref{problem: U}) fully and positively for separable spaces. 
This theorem provides a Banach space having a
separable dual with an equivalent norm whose dual norm is strictly convex and
has the  property that the weak$^*$ topology and the norm
topology coincide on the (new) dual sphere.
A proof of this renorming
theorem can be found in \cite[pp.~113--117]{Di}; it first appeared in Klee's paper
\cite{Kl2} relying on work by Kadets in \cite{Ka}.

In Section~3, using a simple Klee-type renorming \cite{Kl1}, 
we give a
partial positive answer to Problem~\ref{problem: U convex}, and thus to Problem~\ref{problem: U}, in the general case (see Theorem~\ref{main theorem prop U}).
In particular, our results also provide an alternative proof for the separable case. In Section~4, we come back to the Observation and we show that, under natural assumptions, its claim holds for $(a,B,c)$-ideals, which are a far-reaching generalization of $M$-ideals, encompassing 
in particular
$u$- and $h$-ideals.

Our notation is standard. We consider Banach spaces over the scalar field $\K=\R$ or $\K=\C$. For a Banach space $X$, $B_X$ is the closed unit ball and $S_X$ is the unit sphere of $X$. By $\overline{\spann}(x_i)$, we denote the closed linear span of the elements $x_i$. For a subspace $X$ of $Z$, $X^\perp=\{z^*\in Z^*\colon {z^*}_{|X}=0\}$ is the annihilator. 
The density character of the space $X$ is denoted by $\dens X$. 
For a bounded linear
operator $T$, $T^*$ is the adjoint operator, $\ran T$ is the range, and $\ker T$ is the kernel of $T$. 

\section{Useful results}
In this section, we note some useful results regarding property~$U$ and very smooth norms.

Recall that a Banach space $X$ is an \emph{Asplund space} if every separable subspace $Y$ of $X$ has a separable dual space $Y^*$. The following result (implicitly in \cite[Theorem~15]{SS}) is well known.

\begin{thm}\label{Asplund}
  A Banach space $X$ with property~$U$ in its bidual $X^{**}$ is an Asplund space.
\end{thm}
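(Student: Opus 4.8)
The plan is to reduce the assertion to its separable version and then appeal to the classical case. By the standard characterization of Asplund spaces, $X$ is Asplund if and only if every separable closed subspace of $X$ has a separable dual; so it suffices to check that every separable closed subspace $Y\subseteq X$ has $Y^*$ separable. The linchpin of the reduction is the following inheritance fact, which I would prove first: \emph{if $X$ has property~$U$ in $X^{**}$ and $Y\subseteq X$ is a closed subspace, then $Y$ has property~$U$ in $Y^{**}$.} Granting this, each separable $Y\subseteq X$ is a separable space with property~$U$ in its bidual, and the separable case (third paragraph) completes the proof.

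For the inheritance I would run a Hahn--Banach chase using the canonical isometry $Y^{**}\cong Y^{\perp\perp}\subseteq X^{**}$, so that $Y\subseteq Y^{**}\subseteq X^{**}$ and $Y^*\cong X^*/Y^\perp$. Fix $\phi\in Y^*$ and let $\Psi\in(Y^{**})^*$ be an arbitrary norm-preserving extension of $\phi$. Extend $\Psi$ with the same norm to some $\widehat\Psi\in X^{***}$. Then $\widetilde\phi:=\widehat\Psi|_X\in X^*$ is a norm-preserving extension of $\phi$ from $Y$ to $X$, and $\widehat\Psi$ is in turn a norm-preserving extension of $\widetilde\phi$ from $X$ to $X^{**}$; since $X$ has property~$U$ in $X^{**}$, this forces $\widehat\Psi$ to be the canonical one, i.e.\ $\widehat\Psi(x^{**})=x^{**}(\widetilde\phi)$ for all $x^{**}\in X^{**}$. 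Restricting back to $Y^{\perp\perp}$ and using that, for $y^{**}\in Y^{\perp\perp}$, the number $y^{**}(\widetilde\phi)$ depends only on the coset $\widetilde\phi+Y^\perp\in X^*/Y^\perp=Y^*$ — which is exactly $\phi$ — one gets $\Psi(y^{**})=\langle y^{**},\phi\rangle$ for every $y^{**}\in Y^{**}$. Hence $\Psi$ is the canonical extension of $\phi$, independently of all choices, and the norm-preserving extension of $\phi$ to $Y^{**}$ is unique.

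It remains to show that a \emph{separable} Banach space $X$ with property~$U$ in $X^{**}$ has separable dual; this is the classical heart of the result (essentially \cite[Theorem~15]{SS}; see also \cite{S}) and I expect it to be the main obstacle. The engine is property~$SU$: if $(f_n)$ is a bounded sequence in $X^*$ with $f_n\to f$ in the weak$^*$ topology and $\|f_n\|\to\|f\|$, then any cluster point of $(f_n)$ in the $\sigma(X^{***},X^{**})$-topology restricts to $f$ on $X$ and has norm at most $\limsup_n\|f_n\|=\|f\|$, hence norm exactly $\|f\|$; property~$SU$ then forces it to equal the canonical image of $f$, and since the same applies to every subnet, $(f_n)\to f$ weakly in $X^*$. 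Feeding in the metrizability of $(B_{X^*},w^*)$ (valid since $X$ is separable), one wants to conclude that $X^*$ admits no uncountable norm-separated set, i.e.\ is separable. It is precisely this last step — passing from the weak$^*$-to-weak sequential behaviour imposed by property~$SU$ to genuine norm separability of $X^*$ — that draws on real Asplund-space theory and is the crux; the first two steps, by contrast, are elementary Hahn--Banach arguments. Combining the three steps yields the theorem.
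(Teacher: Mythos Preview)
The paper does not supply a proof of this theorem at all; it states the result as well known and refers to \cite[Theorem~15]{SS}. So there is no ``paper's own proof'' to compare against.

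As for the substance of your plan: your reduction to the separable case (step~1) and the Hahn--Banach chase for the inheritance of property~$U$ to closed subspaces (step~2) are both correct. In step~2 the key point is exactly the one you isolate: once property~$U$ of $X$ forces $\widehat\Psi=j_{X^*}(\widetilde\phi)$, the possibly non-unique lift $\widetilde\phi\in X^*$ of $\phi$ is determined only up to $Y^\perp$, and that ambiguity disappears after pairing with $y^{**}\in Y^{\perp\perp}$.

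Step~3, however, is where the actual content lies, and you have not proved it; you have only restated the goal. Your observation that property~$SU$ turns ``$w^*$-convergence $+$ norm convergence'' into weak convergence is correct (and is essentially the Godefroy criterion recorded as Lemma~\ref{lem: w and w* top agree} in the paper), but the passage from this to separability of $X^*$ is precisely the theorem of Smith--Sullivan that the paper is citing. So your proposal reduces the general statement to its separable core and then, in effect, cites the same source the paper does. That is a legitimate outline, but it is not a self-contained proof: the genuine gap is exactly where you flag it.
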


In addition, we will need the following known result for Asplund spaces.

\begin{thm} [see  {\cite[p.~112 and Theorem~8.3.3]{Fa}}] \label{eqv: m-basis wcg}
For a Banach space $X$, the following conditions are equivalent.
\begin{itemize}
    \item [(a)] $X$ has a shrinking \Marku\  basis, i.e., there are $(x_i)_{i\in I}$ in $X$ and $(f_i)_{i\in I}$ in $X^*$ such that 
$I$ has the cardinality $\dens X$,
and
        \begin{itemize}
            \item[$\bullet$] $f_i(x_j)=\delta_{ij}$,
            \item[$\bullet$] $\overline{\spann}(x_i)=X,$
            \item[$\bullet$] $\overline{\spann}(f_i)=X^*$.
        \end{itemize}
    \item [(b)] $X$ is weakly compactly generated (WCG) and Asplund.
\end{itemize}
\end{thm}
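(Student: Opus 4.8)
\medskip
\noindent\emph{Sketch of a proof.} The plan is to prove the two implications separately. The implication (a)$\Rightarrow$(b) is the ``soft'' direction: from the concrete datum of a shrinking \Marku\ basis one reads off both abstract consequences fairly directly. The implication (b)$\Rightarrow$(a) is the substantial one; it is an Amir--Lindenstrauss-type construction carried out by transfinite induction on $\dens X$ via a projectional resolution of the identity.

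For (a)$\Rightarrow$(b), let $(x_i)_{i\in I}$, $(f_i)_{i\in I}$ be a shrinking \Marku\ basis. Since replacing $x_i$ by $x_i/(1+\n{x_i})$ and $f_i$ by $(1+\n{x_i})f_i$ preserves biorthogonality and both density conditions, I may assume $\n{x_i}\le 1$ for all $i$. First I would show that
\[
K:=\{0\}\cup\{x_i\colon i\in I\}
\]
is weakly compact. The crux is that, for each $x^*\in X^*$ and each $\eps>0$, the set $\set{i\in I\colon\abs{x^*(x_i)}\ge\eps}$ is finite: by the shrinking condition $\overline{\spann}(f_i)=X^*$ there are a finite $F\subseteq I$ and scalars $(c_i)_{i\in F}$ with $\n{x^*-\sum_{i\in F}c_i f_i}<\eps$, whence $\abs{x^*(x_j)}=\abs{(x^*-\sum_{i\in F}c_i f_i)(x_j)}<\eps$ for every $j\notin F$ by biorthogonality. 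By Eberlein--\v{S}mulian this forces $K$ to be weakly compact --- any sequence in $K$ is eventually constant or, on a subsequence with pairwise distinct indices, weakly null --- and $\overline{\spann}(K)\supseteq\overline{\spann}(x_i)=X$, so $X$ is WCG. For the Asplund property, given a separable subspace $Y\subseteq X$, a closing-off argument yields a countable $J\subseteq I$ with $Y\subseteq X_J:=\overline{\spann}(x_i\colon i\in J)$ for which $(x_i,f_i|_{X_J})_{i\in J}$ is again a shrinking \Marku\ basis of $X_J$; then $X_J^*=\overline{\spann}(f_i|_{X_J}\colon i\in J)$ is separable, hence so is its quotient $Y^*$. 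Thus $X$ is Asplund.

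For (b)$\Rightarrow$(a), I would induct on $\kappa:=\dens X$. If $\kappa\le\aleph_0$, then $X$ is separable and, being Asplund, has separable dual, and a separable Banach space with separable dual has a countable shrinking \Marku\ basis by the classical Markushevich construction carried out simultaneously in $X$ and $X^*$. If $\kappa$ is uncountable, I would invoke the Amir--Lindenstrauss theorem to fix a projectional resolution of the identity $(P_\alpha)_{\omega\le\alpha\le\kappa}$ on $X$ whose increments $(P_{\alpha+1}-P_\alpha)X$ are WCG of density $<\kappa$ and whose adjoints $(P^*_\alpha)$ again form a projectional resolution of the identity on $X^*$ --- this last refinement being exactly where Asplundness of $X$ enters. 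As subspaces of the Asplund space $X$, the increments (and the separable piece $P_\omega X$) are also Asplund, so the inductive hypothesis supplies a shrinking \Marku\ basis $(x^\alpha_i,\varphi^\alpha_i)_{i\in I_\alpha}$ of each of them. Setting $f^\alpha_i:=\varphi^\alpha_i\circ(P_{\alpha+1}-P_\alpha)\in X^*$, I would verify that the amalgamated system $(x^\alpha_i,f^\alpha_i)_{\alpha,\,i\in I_\alpha}$ is a shrinking \Marku\ basis of $X$: cross-block biorthogonality follows from $P_\alpha P_\beta=P_{\min(\alpha,\beta)}$; completeness $\overline{\spann}(x^\alpha_i)=X$ from the continuity of $(P_\alpha)$ at limit ordinals; and the shrinking condition $\overline{\spann}(f^\alpha_i)=X^*$ from the corresponding continuity of the dual resolution $(P^*_\alpha)$. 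Finally the index set has cardinality $\kappa$, since $\card I_\alpha=\dens\bigl((P_{\alpha+1}-P_\alpha)X\bigr)\le\kappa$, there are at most $\kappa$ ordinals $\alpha$, and the amalgamated vectors span a dense subspace of $X$.

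The hard part will be the implication (b)$\Rightarrow$(a), and within it two points: obtaining the projectional resolution of the identity whose increments remain \emph{WCG} \emph{and} whose adjoint is again a resolution of the identity (so that the transfinite induction really descends within the class ``WCG and Asplund'' and the shrinking property propagates); and the transfinite bookkeeping that shows the amalgamated biorthogonal system is genuinely shrinking, not merely total. By contrast, (a)$\Rightarrow$(b) is routine modulo Eberlein--\v{S}mulian and the standard separable-determination technique.
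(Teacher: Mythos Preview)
The paper does not prove this theorem at all: it is quoted verbatim from Fabian's monograph (the citation ``see [Fa, p.~112 and Theorem~8.3.3]'' is the entire content) and then used as a black box in the proofs of Theorems~\ref{main theorem prop U} and~\ref{main theorem aBc}. So there is no ``paper's own proof'' to compare your sketch against.

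That said, your outline is the standard argument one finds in Fabian's book, and it is essentially correct. Two small comments. In (a)$\Rightarrow$(b), the ``closing-off'' is in fact unnecessary: for \emph{any} countable $J$ with $Y\subseteq X_J:=\overline{\spann}(x_i:i\in J)$, the system $(x_i,f_i|_{X_J})_{i\in J}$ is automatically shrinking for $X_J$, because $f_i|_{X_J}=0$ whenever $i\notin J$ (by biorthogonality), so any approximant $\sum_{i\in F}c_if_i$ of an extension of $\varphi\in X_J^*$ restricts to a combination indexed by $F\cap J$. In (b)$\Rightarrow$(a), the delicate point you correctly flag---that the PRI on a WCG Asplund space can be taken so that the adjoints $(P_\alpha^*)$ form a PRI on $X^*$---is exactly the content of the Fabian--Godefroy machinery (cf.\ \cite{FG}), and is indeed where the Asplund hypothesis does its work; everything else is routine transfinite bookkeeping.
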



Recall that a Banach space $X$ is \emph{weakly compactly generated} (WCG) if $X$ is the closed linear span of some weakly compact subset of $X$. The most important result on WCG spaces is the following Amir--Lindenstrauss theorem.

\begin{thm}[see \cite{AL} or, e.g., {\cite[Theorem~1.2.5]{Fa}}] \label{Amir--Lindenstrauss}
A Banach space $X$ is weakly compactly generated if and only if there exist a set $\Gamma \neq \emptyset$ and an injective weak$^*$-to-weak continuous linear operator from $X^*$ to $c_0(\Gamma)$.
\end{thm}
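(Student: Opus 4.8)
The plan is to prove the Amir--Lindenstrauss theorem, Theorem~\ref{Amir--Lindenstrauss}, in both directions, reducing each implication to the tools already at hand.

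\medskip

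\noindent\emph{Sufficiency ($\Leftarrow$).} Suppose $T\colon X^*\to c_0(\Gamma)$ is injective and weak$^*$-to-weak continuous. First I would observe that $T$ is automatically norm-continuous (a weak$^*$-to-weak continuous operator on a dual space is bounded, since its adjoint maps into the predual up to canonical embeddings). Writing $Te_\gamma^*$ componentwise, each coordinate functional $e_\gamma^*\circ T\in X^{**}$ is weak$^*$-continuous on $X^*$, hence lies in (the canonical image of) $X$; call it $x_\gamma\in X$. For each $x^*\in X^*$ we have $Tx^* = (x^*(x_\gamma))_{\gamma\in\Gamma}\in c_0(\Gamma)$, so $x^*(x_\gamma)\to 0$ along $\Gamma$; and $x^*(x_\gamma)=0$ for all $\gamma$ forces $Tx^*=0$, hence $x^*=0$ by injectivity. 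Thus $\{x_\gamma\}$ separates points of $X^*$, so $K:=\overline{\conv}\{0\}\cup\{x_\gamma/\|x_\gamma\|: x_\gamma\neq 0\}$ — more carefully, a suitable weakly compact set built from the $x_\gamma$ — generates $X$. The standard device is to put $K=\{\,x\in X: |x^*(x)|\le \|Tx^*\|_\infty \text{ for all }x^*\in X^*\,\}$ (a bounded, convex, symmetric, closed set) and check it is weakly compact via the fact that $T$ maps $B_{X^*}$ weak$^*$-to-weak continuously, so $T(B_{X^*})$ is weakly compact in $c_0(\Gamma)$ and its polar-type preimage behaves well; alternatively, scale the $x_\gamma$ so that $\{2^{-n}x_\gamma\}$ lies in a weakly compact set and take its closed convex hull. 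In any case, the closed linear span of $K$ is all of $X$ because $K^\perp$ (as a subset of $X^*$) is trivial, so $X$ is WCG.

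\medskip

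\noindent\emph{Necessity ($\Rightarrow$).} Suppose $X$ is WCG, say $X=\overline{\spann}\,W$ with $W\subseteq X$ weakly compact; we may assume $W$ is convex, symmetric, and $W\subseteq B_X$. The Davis--Figiel--Johnson--Pe\l czy\'nski interpolation construction produces a reflexive Banach space $E$ and a bounded injective operator $j\colon E\to X$ with $W\subseteq j(B_E)$ and dense range. Then $j^*\colon X^*\to E^*$ is weak$^*$-to-weak continuous (it is weak$^*$-to-weak$^*$ continuous as an adjoint, and $E$ reflexive makes weak$^*$ on $E^*$ the same as weak) and injective (since $j$ has dense range). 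Since $E$ is reflexive, $E^*$ is WCG, and a second DFJP step — or directly, since reflexive spaces embed this way — gives an injective weak$^*$-to-weak continuous operator $E^*\to c_0(\Gamma')$; more simply, reflexive separable-slice arguments reduce to the known fact that every reflexive space injects continuously into some $c_0(\Gamma')$ via a weakly continuous (hence weak$^*$-to-weak continuous) map. Composing yields the desired $X^*\to c_0(\Gamma)$.

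\medskip

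\noindent The main obstacle is the necessity direction: getting from ``$X$ WCG'' to an honest operator into $c_0(\Gamma)$ requires either invoking the DFJP factorization through a reflexive space and then knowing reflexive spaces inject into $c_0(\Gamma)$, or else carrying out directly the construction of a long sequence of weakly compact sets $\{W_n\}$ with $W_n$ generating and building the coordinates of the $c_0(\Gamma)$-valued map from a transfinite family of functionals separating points while controlling the ``$c_0$ decay'' — a bookkeeping-heavy argument. Since the statement is quoted with a reference, the cleanest exposition is to cite \cite{AL} or \cite[Theorem~1.2.5]{Fa} for the hard direction and only sketch the easy direction as above; but if a self-contained argument is wanted, the DFJP route is the one I would flesh out.
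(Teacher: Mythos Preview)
The paper does not supply a proof of this theorem at all: it is quoted in Section~2 as a known tool, with references to \cite{AL} and \cite[Theorem~1.2.5]{Fa}, and is used as a black box in the proof of Theorem~\ref{main theorem prop U}. So there is no ``paper's own proof'' to compare against; your final suggestion of simply citing the references is exactly what the authors do.

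That said, your sketch has a genuine gap in the necessity direction. After the DFJP factorization you reduce to finding an injective weak$^*$-to-weak continuous (equivalently, weak-to-weak continuous, by reflexivity) operator from the reflexive space $E^*$ into some $c_0(\Gamma')$. You invoke this as ``the known fact that every reflexive space injects continuously into some $c_0(\Gamma')$ via a weakly continuous map,'' but that is precisely the Amir--Lindenstrauss theorem specialized to reflexive spaces (which are WCG). Without an independent argument in the reflexive case --- and the standard one goes through projectional resolutions of the identity, i.e., the same transfinite machinery you are trying to avoid --- the reduction is circular. The sufficiency direction is essentially right, though the cleanest way to produce the weakly compact generating set is to note that weak$^*$-to-weak continuity forces $T=S^*$ for a (weakly compact) $S\colon \ell_1(\Gamma)\to X$ with dense range, and take $K=\overline{S(B_{\ell_1(\Gamma)})}$; your polar-type set $K$ is not obviously weakly compact as written.
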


We will also make use of the notion of very smooth spaces. First, recall that a Banach space $X$ is \emph{smooth} whenever for every $x\in S_X$, there exists a unique functional $f_x \in S_{X^*}$ such that $f_x(x)=1$. If $X$ is smooth, then the \emph{support mapping} $x \mapsto f_x$ from $S_X$ to $S_{X^*}$ is norm-to-weak$^*$ continuous (see, e.g., \cite[p.~22]{Di}).
\begin{defn} [see \cite{DF} or, e.g., {\cite[p.~31]{Di}}]
A smooth Banach space $X$ is called \emph{very smooth} if the support mapping $x \mapsto f_x$ from $S_X$ to $S_{X^*}$ is norm-to-weak continuous.
\end{defn}

It is well known that a Banach space $X$ is smooth whenever its dual space $X^*$ is strictly convex, but it need not be very smooth in general \cite{S}.

To prove that a renorming is very smooth, we will use the following result.

\begin{lem} [see \cite{G} or, e.g., {\cite[Lemma~III.2.14]{HWW}}]\label{lem: w and w* top agree}
A Banach space $X$ has property~$U$ in its bidual $X^{**}$ if and only if the relative weak and weak$^*$ topologies on $B_{X^*}$ coincide on $S_{X^*}$.
\end{lem}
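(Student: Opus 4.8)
The plan is to carry everything over to the bidual $X^{***}$ of $X^*$, where $B_{X^{***}}$ is weak$^*$ compact, and to use the description of property~$U$ recalled above. Identifying $X^*$ with its canonical image in $X^{***}$, the projection $\pi_X$ is the identity on $X^*$; for $\phi\in X^{***}$ the functional $\pi_X\phi\in X^*$ coincides with $\phi$ on $X$, one has $\|\pi_X\phi\|\le\|\phi\|$, and property~$U$ in $X^{**}$ is exactly the implication $\|\pi_X\phi\|=\|\phi\|\Rightarrow\pi_X\phi=\phi$. Since the weak topology of $X^*$ is always finer than the weak$^*$ topology, the statement to prove reads: $X$ has property~$U$ in $X^{**}$ if and only if every net in $S_{X^*}$ that converges weak$^*$ to a point of $S_{X^*}$ also converges weakly to that point.

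For the ``only if'' part I would take $(x_\alpha^*)\subseteq S_{X^*}$ with $x_\alpha^*\to x^*$ weak$^*$, $x^*\in S_{X^*}$, regard $(x_\alpha^*)$ as a net in the $\sigma(X^{***},X^{**})$-compact set $B_{X^{***}}$, and consider an arbitrary cluster point $\phi$ of this net. Testing against elements of $X\subseteq X^{**}$ gives $\pi_X\phi=x^*$, and weak$^*$ lower semicontinuity of the norm gives $1=\|x^*\|=\|\pi_X\phi\|\le\|\phi\|\le1$; by property~$U$ this forces $\phi=\pi_X\phi=x^*$. Thus $x^*$ is the unique $\sigma(X^{***},X^{**})$-cluster point of $(x_\alpha^*)$ in a compact space, so $x_\alpha^*\to x^*$ in $\sigma(X^{***},X^{**})$, and restricting the test functionals to $X^{**}$ yields $x_\alpha^*\to x^*$ weakly in $X^*$.

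For the ``if'' part I would start from $\phi\in X^{***}$ with $\|\pi_X\phi\|=\|\phi\|$, assume after scaling that $\|\phi\|=1$, and set $x^*:=\pi_X\phi\in S_{X^*}$. By Goldstine's theorem pick a net $(y_\alpha^*)\subseteq B_{X^*}$ with $y_\alpha^*\to\phi$ in $\sigma(X^{***},X^{**})$; since $\|x^*\|=1$, evaluating $y_\alpha^*$ at points $x\in B_X$ with $x^*(x)$ close to $1$ shows $\|y_\alpha^*\|\to1$, so $z_\alpha^*:=y_\alpha^*/\|y_\alpha^*\|$ is eventually defined, lies in $S_{X^*}$, and still satisfies $z_\alpha^*\to\phi$ in $\sigma(X^{***},X^{**})$ because $\|z_\alpha^*-y_\alpha^*\|\to0$. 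Testing only against $X$ gives $z_\alpha^*\to x^*$ weak$^*$ in $X^*$, whence by hypothesis $z_\alpha^*\to x^*$ weakly in $X^*$; comparing this with $z_\alpha^*\to\phi$ in $\sigma(X^{***},X^{**})$ against an arbitrary element of $X^{**}$ forces $\phi=x^*=\pi_X\phi$, which is property~$U$.

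The routine ingredients are the elementary fact that a net in a compact space with a single cluster point converges to it, and the repeated passage between $\sigma(X^{***},X^{**})$ and the weak and weak$^*$ topologies of $X^*$. The one step that needs a little care is the normalization in the ``if'' part: it works precisely because $\|\pi_X\phi\|=\|\phi\|$ keeps the Goldstine net asymptotically on the unit sphere, and I expect this bookkeeping to be the main (mild) obstacle.
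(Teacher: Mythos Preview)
The paper does not supply its own proof of this lemma; it is quoted from Godefroy and from \cite[Lemma~III.2.14]{HWW}. Your argument is correct and is essentially the standard one found in those references: pass to $X^{***}$, use the $SU$-characterisation $\|\pi_X\phi\|=\|\phi\|\Rightarrow\pi_X\phi=\phi$, exploit weak$^*$ compactness of $B_{X^{***}}$ for one direction, and Goldstine's theorem plus normalisation for the other.

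Two small remarks. First, in the ``only if'' direction you take the net in $S_{X^*}$, whereas the phrasing ``the relative weak and weak$^*$ topologies on $B_{X^*}$ coincide on $S_{X^*}$'' is usually read as a statement about nets in $B_{X^*}$ with limit in $S_{X^*}$; your cluster-point argument works verbatim in that generality, so nothing is lost. Second, the normalisation step in the ``if'' direction is exactly right: weak$^*$ lower semicontinuity of the norm together with $\|\pi_X\phi\|=1$ forces $\|y_\alpha^*\|\to1$, and $\|z_\alpha^*-y_\alpha^*\|=\bigl|1-\|y_\alpha^*\|\bigr|\to0$ transfers the $\sigma(X^{***},X^{**})$-limit. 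So the ``mild obstacle'' you flag is indeed harmless.
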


\section{Renorming of Banach spaces with property~$U$}

In order to try to solve Problem~\ref{problem: U convex}, we follow the
strategy of the 
proof of the Observation.
This
proof 
has three steps.
\begin{itemize}

    \item [(i)] The Banach space $X$ has a shrinking \Marku\ basis (as proved by  Fabian and Godefroy \cite{FG}).
    \item[(ii)] Using the shrinking \Marku\ basis, one obtains an injective weak$^*$-to-weak continuous linear operator from $X^*$ to $c_0(\Gamma)$. This allows one to equip $X$ with a rather standard equivalent norm $|\cdot |$ such that for $\widetilde X:=(X,|\cdot |)$, the dual norm of $\widetilde X^*$ is strictly convex (as proved already by Amir and Lindenstrauss \cite{AL}).
    
    \item[(iii)] It can be shown that the renorming in (ii) is such that $\widetilde X$ is still an $M$-ideal in $\widetilde X^{**}$ (as proved by Harmand and Rao in \cite{HR}).
\end{itemize}

The step~(iii) can be extended from the $M$-ideal case (see \cite{HR} or \cite[Proposition~III.2.11]{HWW})
  to the property~$U$ case by the following theorem.

\begin{thm}\label{thm: eqv norm prop U}

Let $X$ be a Banach space with property~$U$ in its bidual $X^{**}$. If $Y$ is a Banach space and $T\colon Y \to  X$ a weakly compact operator, then 
\[
|x^*|:=\n{x^*}+\n{T^* x^*}, \quad x^* \in X^*,
\]
is an equivalent dual norm on $X^*$ for which
$\widetilde{X}:=(X,|\cdot|)$ has property~$U$ in $\widetilde{X}^{**}$. 
    
Moreover, if $T^*$ is injective and there is a strictly convex Banach space $Z$ such that $\ran T^* \subset Z\subset Y^*$, then $\widetilde{X}^*$ is strictly convex.
\end{thm}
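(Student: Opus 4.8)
The plan is to verify three things in sequence: that $|\cdot|$ is an equivalent dual norm, that $\widetilde X$ retains property~$U$ in its bidual, and finally the moreover-part about strict convexity.

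\medskip

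\textbf{Step 1: $|\cdot|$ is an equivalent dual norm.} Since $T$ is bounded, $\|T^*x^*\|\le\|T\|\,\|x^*\|$, so $\|x^*\|\le|x^*|\le(1+\|T\|)\|x^*\|$; thus $|\cdot|$ is an equivalent norm on $X^*$. To see it is a \emph{dual} norm I would check that its closed unit ball $B:=\{x^*\colon |x^*|\le 1\}$ is weak$^*$ closed. The map $x^*\mapsto\|x^*\|$ is weak$^*$ lower semicontinuous, and $x^*\mapsto\|T^*x^*\|$ is weak$^*$ lower semicontinuous because $T$ is weakly compact: $T^*$ is weak$^*$-to-weak continuous (indeed $T^*B_{X^*}$ is relatively weakly compact), hence weak$^*$-to-norm-lsc for the norm. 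So $|\cdot|$ is weak$^*$ lsc on $X^*$ and $B$ is weak$^*$ closed; equivalently $|\cdot|$ is the dual of the predual norm on $X$ given by the gauge of $B_\circ$.

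\medskip

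\textbf{Step 2: $\widetilde X$ has property~$U$ in its bidual.} By Lemma~\ref{lem: w and w* top agree}, it suffices to show that the relative weak and weak$^*$ topologies on $B_{\widetilde X^*}$ agree on $S_{\widetilde X^*}$. Since $T^*$ is weak$^*$-to-weak continuous and norm-to-norm continuous on $X^*$, the functional $x^*\mapsto\|T^*x^*\|$ is both weak$^*$ lsc and weakly continuous in an appropriate sense; the key point is that the two topologies on the \emph{old} sphere $S_{X^*}$ already agree (since $X$ has property~$U$ in $X^{**}$, again by Lemma~\ref{lem: w and w* top agree}), and one transfers this to the new sphere. Concretely, take a net $(x_\alpha^*)$ in $S_{\widetilde X^*}$ with $x_\alpha^*\xrightarrow{w^*}x^*\in S_{\widetilde X^*}$; then $\|x_\alpha^*\|\to\|x^*\|$ and $\|T^*x_\alpha^*\|\to\|T^*x^*\|$ follow from $|x_\alpha^*|\to|x^*|$ together with the weak$^*$ lsc of each summand (lsc plus the sum of the two limits being $\le$ the sum forces equality in both). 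So, after normalizing, $x_\alpha^*/\|x_\alpha^*\|$ lies on $S_{X^*}$ and converges weak$^*$ to $x^*/\|x^*\|$; by property~$U$ of $X$ it converges weakly, hence $x_\alpha^*\to x^*$ weakly in $X^*$. Since the weak topology of $(X^*,|\cdot|)$ coincides with that of $(X^*,\|\cdot\|)$ (equivalent norms), we get weak convergence in $\widetilde X^*$, which is what we needed.

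\medskip

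\textbf{Step 3: strict convexity of $\widetilde X^*$ under the extra hypotheses.} Suppose $x^*,y^*\in S_{\widetilde X^*}$ with $|\tfrac12(x^*+y^*)|=1$. From the triangle inequality applied separately to $\|\cdot\|$ and to $\|T^*(\cdot)\|$, equality forces $\|\tfrac12(x^*+y^*)\|=\tfrac12\|x^*\|+\tfrac12\|y^*\|$ \emph{and} $\|T^*(\tfrac12(x^*+y^*))\|=\tfrac12\|T^*x^*\|+\tfrac12\|T^*y^*\|$. Now one uses that $T^*x^*,T^*y^*\in\ran T^*\subset Z$, and $Z$ is strictly convex: the equality for $\|T^*(\cdot)\|$ together with $\|Tx^*\|,\|T^*y^*\|$ possibly having different lengths — after scaling to unit vectors in $Z$ — yields $T^*x^*=\lambda T^*y^*$ for some $\lambda>0$ (this is the standard consequence of strict convexity: the midpoint of two points of the same norm having that same norm forces them equal, and the general case reduces to it). Combined with $T^*$ injective, $x^*=\lambda y^*$; feeding this back into the first equality $\|x^*+y^*\|=\|x^*\|+\|y^*\|$ and $|x^*|=|y^*|=1$ gives $\lambda=1$, i.e.\ $x^*=y^*$. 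Hence $\widetilde X^*$ is strictly convex.

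\medskip

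\textbf{Main obstacle.} The routine parts are Steps~1 and~3; the heart of the matter is Step~2, transferring property~$U$ (equivalently, the agreement of $w$ and $w^*$ on the sphere) from the old norm to the new one. The delicate point there is showing that weak$^*$ convergence of a net on $S_{\widetilde X^*}$ forces convergence of \emph{both} norm-summands $\|x_\alpha^*\|$ and $\|T^*x_\alpha^*\|$ to their respective limits — this is where the weak$^*$ lower semicontinuity of $x^*\mapsto\|T^*x^*\|$, which rests crucially on the weak compactness of $T$, is used in an essential way. Once that is in place, the reduction to property~$U$ of $X$ via Lemma~\ref{lem: w and w* top agree} is clean.
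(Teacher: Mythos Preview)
Your argument is correct, but Step~2 takes a genuinely different route from the paper. The paper computes the new norm on the third dual explicitly: the isometric embedding $S\colon x^*\mapsto(x^*,T^*x^*)$ into $X^*\oplus_1 Y^*$ gives, via $S^{**}$, the formula $|x^{***}|=\|x^{***}\|+\|T^{***}x^{***}\|$; weak compactness then yields $T^{***}\pi_X=T^{***}$, so $|\pi_X x^{***}|=|x^{***}|$ forces $\|\pi_X x^{***}\|=\|x^{***}\|$, and the $SU$ characterization of property~$U$ finishes the argument. Your approach via Lemma~\ref{lem: w and w* top agree} stays entirely in $X^*$ and never climbs to $X^{***}$. (Strictly speaking your net should run in $B_{\widetilde X^*}$ rather than $S_{\widetilde X^*}$ to match the lemma, but the same reasoning applies once you note $|x_\alpha^*|\to 1$ by weak$^*$ lower semicontinuity.) A bonus of your route that you did not notice: your proof never actually uses the weak compactness of $T$. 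The weak$^*$ lower semicontinuity of $x^*\mapsto\|T^*x^*\|$, which you flag as the crux, holds for \emph{any} bounded $T$, since $T^*$ is weak$^*$-to-weak$^*$ continuous merely by virtue of being an adjoint and the norm on $Y^*$ is weak$^*$ lower semicontinuous. So the property~$U$ conclusion already follows without that hypothesis; the paper retains it because the explicit third-dual formula and the identity $T^{***}\pi_X=T^{***}$ are reused verbatim in Theorem~\ref{thm: eqv norm aBc}, where weak compactness is genuinely needed. For Step~3 the paper simply invokes Klee's renorming theorem, of which your direct computation is essentially the proof.
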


\begin{proof}
Since $T^*$ is an adjoint operator, it is 
weak$^*$-to-weak$^*$ continuous, and thus the mapping $x^* \mapsto \n{T^*x^*}$, $x^* \in X^*$, is weak$^*$ lower semicontinuous. Therefore, $|\cdot|$ is an equivalent dual norm by a well-known result of Klee \cite{Kl1}; see, e.g., \cite[p.~106]{Di}. 

To calculate $|\cdot|$ on the third dual $X^{***}$, we use the following argument due to Harmand and Rao in \cite{HR}.
By definition, the operator
$$
S:(X^*,|\cdot|) \to  X^*\oplus_1 Y^*, \quad x^*\mapsto (x^*,T^*x^*),
$$
is isometric; hence
$$
S^{**}:(X^{***},|\cdot|) \to  X^{***}\oplus_1 Y^{***}
$$
is isometric too. One can easily see that $S^{**}x^{***}=(x^{***},T^{***}x^{***})$, so \[
|x^{***}|=\n{x^{***}}+\n{T^{***}x^{***}}, \quad x^{***}\in X^{***}.
\]
By the weak compactness
of $T^*$, we get that $\ran T^{***}\subset Y^*$, hence $\pi_{Y}T^{***}=T^{***}$. Since $\pi_Y T^{***}=T^{***}\pi_X$, we conclude that $T^{***}=T^{***}\pi_{X}$.

We need to show that the natural projection $\pi_{X}\in \mathcal{L}(X^{***})$ satisfies the condition \[
|\pi_{X}x^{***}|=|x^{***}| \quad \Rightarrow \quad \pi_{X}x^{***}=x^{***}.
\]
Let $x^{***}\in X^{***}$ be such that $|\pi_{X}x^{***}|=|x^{***}|$. Then
\begin{align*}
0&=|\pi_{X}x^{***}|-|x^{***}|\\
&=\n{\pi_{X}x^{***}}+\n{T^{***}\pi_{X}x^{***}}-\n{x^{***}}-\n{T^{***}x^{***}}\\
&=\n{\pi_{X}x^{***}}-\n{x^{***}}.
\end{align*}
Therefore, $\n{\pi_{X}x^{***}}=\n{x^{***}}$, and thus  $\pi_{X}x^{***}=x^{***}$ by property~$U$ of $X$ in $X^{**}$.

Moreover, if $T^*$ is injective and there is a strictly convex Banach space $Z$ such that $\ran T^* \subset Z\subset Y^*$, then, thanks to Klee's renorming theorem in \cite {Kl1} (see, e.g., \cite[Theorem~1, p.~100]{Di}), $\widetilde{X}^*$ is strictly convex. 
\end{proof}

Using Theorem~\ref{thm: eqv norm prop U}, we can now  give a partial answer to Problem~\ref{problem: U}.

\begin{thm}\label{main theorem prop U}
If a WCG Banach space $X$ has property~$U$ in its bidual $X^{**}$, then $X$ has a shrinking \Marku\ basis and $X$ admits an equivalent very smooth norm under which $X$ is totally smooth in its bidual.
\end{thm}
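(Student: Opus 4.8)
The plan is to extract the \Marku\ basis from the structure theory of Section~2 and to obtain the renorming from Theorem~\ref{thm: eqv norm prop U}, applied to a suitably chosen weakly compact operator into $X$. Since $X$ has property~$U$ in $X^{**}$, Theorem~\ref{Asplund} gives that $X$ is an Asplund space; as $X$ is also WCG by hypothesis, Theorem~\ref{eqv: m-basis wcg} yields a shrinking \Marku\ basis for $X$. This settles the first assertion, and it plays no further role in what follows.

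For the renorming, I would first invoke the Amir--Lindenstrauss theorem (Theorem~\ref{Amir--Lindenstrauss}): there are a nonempty set $\Gamma$ and an injective weak$^*$-to-weak continuous operator $S\colon X^*\to c_0(\Gamma)$. The weak topology of $c_0(\Gamma)$ is exactly the restriction to $c_0(\Gamma)$ of the weak$^*$ topology of $\ell_\infty(\Gamma)=\ell_1(\Gamma)^*$, so $S$, viewed as an operator $X^*\to\ell_\infty(\Gamma)$, is weak$^*$-to-weak$^*$ continuous; hence $S=T^*$ for a bounded operator $T\colon\ell_1(\Gamma)\to X$. Moreover $S(B_{X^*})$ is weak$^*$-compact in $\ell_\infty(\Gamma)$ and contained in $c_0(\Gamma)$, where the weak$^*$ topology coincides with the weak topology, so $S(B_{X^*})$ is weakly compact; thus $T^*=S$, and therefore $T$, is weakly compact. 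Since $S$ is injective, so is $T^*$, and $\ran T^*\subseteq c_0(\Gamma)$. Now endow the vector space $c_0(\Gamma)$ with an equivalent strictly convex norm (for instance Day's norm) and let $Z$ be the resulting Banach space; then $\ran T^*\subseteq Z\subseteq\ell_\infty(\Gamma)$ with $Z$ strictly convex. Theorem~\ref{thm: eqv norm prop U}, applied with $Y=\ell_1(\Gamma)$ and this $T$, then produces an equivalent dual norm $|\cdot|$ on $X^*$ such that $\widetilde X:=(X,|\cdot|)$ has property~$U$ in $\widetilde X^{**}$ and $\widetilde X^*$ is strictly convex.

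It remains to recognize $\widetilde X$ as totally smooth and very smooth. Total smoothness of $\widetilde X$ in its bidual is immediate from Theorem~\ref{thm: totally smooth}, since $\widetilde X$ has property~$U$ in $\widetilde X^{**}$ and $\widetilde X^*$ is strictly convex. For very smoothness, strict convexity of $\widetilde X^*$ forces $\widetilde X$ to be smooth, so the support mapping $x\mapsto f_x$ is a well-defined map from $S_{\widetilde X}$ into $S_{\widetilde X^*}$ which is norm-to-weak$^*$ continuous. By Lemma~\ref{lem: w and w* top agree}, property~$U$ of $\widetilde X$ in $\widetilde X^{**}$ forces the relative weak and weak$^*$ topologies of $B_{\widetilde X^*}$ to agree on $S_{\widetilde X^*}$; since the support mapping takes its values in $S_{\widetilde X^*}$, it is then also norm-to-weak continuous, i.e., the norm $|\cdot|$ is very smooth.

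The only step that needs an argument of its own, rather than a citation to Section~2, is the passage from the Amir--Lindenstrauss operator $S$ to a \emph{weakly compact} operator $T\colon\ell_1(\Gamma)\to X$ with injective adjoint whose range lies in a strictly convex space; the rest is assembly of the quoted results. (One may instead avoid $c_0(\Gamma)$ entirely by applying the Davis--Figiel--Johnson--Pe\l czy\'nski factorization to a weakly compact generating set of $X$ to obtain a reflexive $Y$ and a weakly compact, dense-range operator $T\colon Y\to X$, renorming $Y$ so that $Y^*$ is strictly convex, and taking $Z=Y^*$; then the strict-convexity hypothesis of Theorem~\ref{thm: eqv norm prop U} is satisfied trivially.)
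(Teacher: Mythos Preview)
Your proof is correct and follows essentially the same route as the paper's: obtain the shrinking \Marku\ basis from Theorems~\ref{Asplund} and~\ref{eqv: m-basis wcg}, take an injective weak$^*$-to-weak continuous $S\colon X^*\to c_0(\Gamma)$, recognise it as the adjoint of a weakly compact $T\colon c_0(\Gamma)^*\to X$, apply Theorem~\ref{thm: eqv norm prop U} with Day's strictly convex norm on $c_0(\Gamma)$ playing the role of $Z$, and deduce very smoothness via Lemma~\ref{lem: w and w* top agree}. The only cosmetic difference is that the paper, after noting that the Amir--Lindenstrauss theorem already supplies $S$, chooses to write $S$ down explicitly from the \Marku\ basis as $x^*\mapsto (x^*(x_i))_{i\in I}$, whereas you cite Theorem~\ref{Amir--Lindenstrauss} directly and (correctly) remark that the basis is not needed for the renorming step.
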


\begin{proof}
Since $X$ is also Asplund (see Theorem \ref{Asplund}), it has a shrinking \Marku\  basis (see Theorem \ref{eqv: m-basis wcg}). All we need to finish the proof is an injective weak$^*$-to-weak continuous linear operator $S\colon X^* \to c_0(\Gamma)$ (for some set $\Gamma$). Such an operator $S$ exists according to the Amir--Lindenstrauss theorem (see Theorem~\ref{Amir--Lindenstrauss}). However, $S$ can be very easily constructed using our shrinking \Marku\ basis. From now, we follow the proof of \cite[Theorem~III.4.6(e)]{HWW}.

Let $(x_i,f_i)_{i \in I}$ with $x_i \in X$, $f_i \in X^*$
be a shrinking \Marku\ basis.
Assuming $\n{x_i}=1$, we define an operator $S: X^* \to c_0(I)$ by
\[
x^* \mapsto \left (x^*(x_i) \right ), \quad x^*\in X^*.
\]
It is easy to check that the operator $S$ is well-defined, injective, and weak$^*$-to-weak continuous.
In particular, $S$ is weakly compact and weak$^*$-to-weak$^*$ continuous when considered as an operator into $c_0 (I)^{**}$, for which we use the notation~$\tilde S$. Hence, $\tilde S$ is the adjoint of a weakly compact operator $T: c_0(I)^{*} \to X$.

We now equip $c_0(I)$ with Day's equivalent strictly convex norm \cite{Da} (see, e.g., \cite[p.~94]{Di}).
Since $T$ satisfies the requirements in Theorem~\ref{thm: eqv norm prop U}, we get an equivalent smooth norm $|\cdot |$ on $X$
whose dual norm is strictly convex and for which $X$ still has property~$U$ in its bidual. The norm $|\cdot |$ on $X$ is, in fact, very smooth. Indeed, as was mentioned above,
the support mapping on a smooth space is always 
norm-to-weak$^*$ continuous, hence for $(X,|\cdot|)$, by Lemma~\ref{lem: w and w* top agree}, the support mapping is norm-to-weak continuous, i.e., $(X, |\cdot|)$ is very smooth.
\end{proof}


Since separable Banach spaces are WCG, Theorem~\ref{main theorem prop U}
gives an alternative proof to the separable case
considered by Sullivan in \cite{S} that was mentioned in the Introduction.

\begin{cor}
If a separable Banach space $X$ has property~$U$ in its bidual $X^{**}$, then $X$ admits an equivalent very smooth norm under which $X$ is totally smooth in its bidual.
\end{cor}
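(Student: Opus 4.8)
The plan is to obtain the Corollary as an immediate consequence of Theorem~\ref{main theorem prop U}; the only point that needs to be addressed is that a separable Banach space is weakly compactly generated, after which nothing more is required.

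First I would verify the WCG property for separable $X$. Choose a sequence $(x_n)\noo$ that is dense in $B_X$, and put $K=\set{x_n/n\colon n\in\N}\cup\set{0}$. Since $\n{x_n/n}\le 1/n\to 0$, the set $K$ is a norm-convergent sequence together with its limit, hence norm-compact and a fortiori weakly compact. Its closed linear span contains every $x_n$, so $\overline{\spann}(K)\supset\overline{\spann}(x_n)=X$ (the last equality because $(x_n)$ is dense in $B_X$ and $\overline{\spann}(B_X)=X$). Thus $X$ is WCG.

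Second, since $X$ also has property~$U$ in $X^{**}$ by hypothesis, Theorem~\ref{main theorem prop U} applies directly and yields an equivalent very smooth norm on $X$ under which $X$ is totally smooth in its bidual; this completes the proof. There is essentially no obstacle: the one nontrivial ingredient, the passage from property~$U$ through a shrinking \Marku\ basis and Day's norm to a totally smooth very smooth renorming, is already packaged in Theorem~\ref{main theorem prop U}, and the reduction ``separable $\Rightarrow$ WCG'' above is routine. Alternatively, one could bypass the general WCG machinery and run the argument of Theorem~\ref{main theorem prop U} directly in the separable setting: by Theorem~\ref{Asplund} the space $X$ is Asplund, so $X^*$ is separable, and one may take $\Gamma=\N$ with the injective weak$^*$-to-weak continuous operator $S\colon X^*\to c_0$ given by $x^*\mapsto(x^*(x_n)/n)$; equipping $c_0$ with Day's strictly convex norm and invoking Theorem~\ref{thm: eqv norm prop U} together with Lemma~\ref{lem: w and w* top agree} then produces the desired renorming exactly as before.
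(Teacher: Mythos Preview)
Your proposal is correct and matches the paper's approach: the paper likewise deduces the corollary by noting that separable Banach spaces are WCG and then invoking Theorem~\ref{main theorem prop U}. Your added detail on why separable implies WCG and the optional direct construction of $S\colon X^*\to c_0$ are sound elaborations, and the paper itself remarks (just after the corollary) that in the separable case one can obtain a shrinking \Marku\ basis more simply via \cite[Proposition~1.f.3]{LT1}.
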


Readers particularly interested in the separable case as expounded in the previous corollary should notice that the rather easy argument of \cite[Proposition~1.f.3]{LT1} shows that a Banach space with a separable dual admits a shrinking \Marku\ basis. 

In order to obtain
examples of spaces having 
a shrinking \Marku\ basis, we can use the notion of $U^*$-spaces, which is dual to property~$U$.

\begin{defn}[see \cite{CN1}]
A Banach space $X$ is said to be a \emph{$U^*$-space} in its bidual $X^{**}$ if for every $x^{***} \in X^{***}$ with $\pi_X x^{***}\neq 0$, 
\[
\n{x^{***}-\pi_X x^{***}}<\n{x^{***}}.
\]
\end{defn}

In \cite[proof of Theorem~4.4]{CN1},
it was observed that the proofs of \cite[Theorems~1 and~3]{FG} essentially yield that every Asplund $U^*$-space has a shrinking \Marku\ basis and is WCG. Therefore, Theorem~\ref{main theorem prop U} applies to any $U^*$-space with property $U$ in its bidual. This, together with some help from
the literature, will be used in the next example.

\begin{ex}\label{ex: l_p sum c_0}
Let $\Gamma$ be an infinite set and let $1<p<\infty$. The $l_p$-sum $l_p(c_0(\Gamma))$ admits an equivalent very smooth norm under which $X$ is totally smooth in its bidual, but it cannot be equivalently renormed to be an $M$-ideal in its bidual. 
\end{ex}

\begin{proof}
It is well known (see, e.g., \cite[Example~III.1.4(a)]{HWW}) that $c_0(\Gamma)$ is an $M$-ideal in its bidual. Hence, clearly, it is a $U^*$-space. This property extends
to the $l_p$-sum, which also has property $U$ in its bidual (see \cite[Proposition~2.2]{CN1}). We have the desired renorming of $l_p(c_0(\Gamma))$ thanks to Theorem~\ref{main theorem prop U}.

Assume, for the sake of contradiction, that $X:=(l_p(c_0(\Gamma)),|\cdot |)$ is an $M$-ideal in $X^{**}$ for some equivalent norm $|\cdot |$ on $l_p(c_0(\Gamma))$. Since $M$-ideals in their biduals are stable by taking closed subspaces (see \cite{HL} or, e.g., \cite[Theorem III.1.6]{HWW}), $Y:=(l_p(c_0),|\cdot |)$ is an $M$-ideal in $Y^{**}$. This contradicts \cite[Proposition~4.4]{GKS} stating that if a separable $M$-ideal $Y$ in $Y^{**}$ has a boundedly complete Schauder decomposition $(Y_n)_{n=1}^\infty$, then all but finitely many subspaces $Y_n$ are reflexive. In our case, all $Y_n$, $n=1,2,\dots$,
are isomorphic to $c_0$ and thus non-reflexive.
\end{proof}

\section{Renorming of $(a,B,c)$-ideals}

According to \cite{GKS}, a closed subspace $X$ of a Banach space $Z$ is said to be an \emph{ideal} in $Z$ if there is a contractive
projection $P$ on $Z^*$ such that $\ker P =X^\perp$. In this case, the projection $P$ is called an \emph{ideal projection}. If $\ran P$ is norming, then the ideal is called \emph{strict}. If $Z=X^{**}$ and $P=\pi_X$, then 
the ideal is called \emph{canonical}. Canonical ideals are strict, but not vice versa.

Let $a,c\ge 0$ and let $B\subset \K$ be a compact set. If $X$ is an ideal in $Z$ with an ideal projection $P$ such that
\[
 \n{az^{*}+bPz^{*}}+c\n{Pz^{*}}\le \n{z^{*}} \quad \forall b \in B
 \]
for all $z^*$ in $Z^*$, then $X$ is said to be an \emph{$(a,B,c)$-ideal} in $Z$.
The $(a,B,c)$-ideals were introduced in \cite{O6} (see also \cite{O5}), but got their name later in \cite{OP}. This approach unifies all previously studied special cases of ideals. For instance, it is easy to see that $M$-ideals coincide with $(1,\{-1\},1)$-ideals, $u$-ideals coincide with $(1,\{-2\},0)$-ideals, and $h$-ideals are the same as $(1,\{-(1+\lambda)\colon \lambda \in S_{\mathcal{\C}}\},0)$-ideals. The notions $u$- and $h$-ideals have been deeply studied in \cite{GKS}.

In the context of $(a,B,c)$-ideals, the Observation (from the Introduction) triggers the following natural question (cf.\ Problem~\ref{problem: U}).

\begin{problem} \label{problem: aBc}
If $X$ is an 
$(a,B,c)$-ideal with property~$U$ in its bidual $X^{**}$, then does $X$ admit an equivalent norm under which $X$ is totally smooth and is still an $(a,B,c)$-ideal in its bidual?
\end{problem}

Similarly to the property~$U$ case, we prove that the renorming from step (iii) from the proof of the Observation can be extended to canonical $(a,B,c)$-ideals. Concerning the special case of $M$-ideals, recall that an $M$-ideal in its bidual is always canonical.

\begin{thm}\label{thm: eqv norm aBc}
Let $X$ be a Banach space which is a canonical $(a,B,c)$-ideal in its bidual $X^{**}$. If $Y$ is a Banach space and $T\colon Y \to  X$ is a weakly compact operator, then 
\[
|x^*|:=\n{x^*}+\n{T^* x^*}, \quad x^* \in X^*,
\]
is an equivalent dual norm under which $\widetilde{X}=(X,|\cdot |)$ is still a canonical $(a,B,c)$-ideal in $\widetilde{X}^{**}$.

Moreover, if $T^*$ is injective and there is a strictly convex Banach space $Z$ such that $\ran T^* \subset Z\subset Y^*$, then $\widetilde{X}^*$ is strictly convex.
\end{thm}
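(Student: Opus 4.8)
The plan is to mirror the structure of the proof of Theorem~\ref{thm: eqv norm prop U}, exploiting the fact that the Harmand--Rao computation of the norm $|\cdot|$ on the third dual is completely unaffected by which flavour of ideal we are dealing with. First I would record, exactly as before, that $T^*$ being an adjoint operator is weak$^*$-to-weak$^*$ continuous, so $x^*\mapsto\n{T^*x^*}$ is weak$^*$ lower semicontinuous and hence $|\cdot|$ is an equivalent dual norm by Klee's theorem \cite{Kl1}. The same isometric embedding $S\colon(X^*,|\cdot|)\to X^*\oplus_1 Y^*$, $x^*\mapsto(x^*,T^*x^*)$, bidualizes to an isometry $S^{**}$ with $S^{**}x^{***}=(x^{***},T^{***}x^{***})$, yielding $|x^{***}|=\n{x^{***}}+\n{T^{***}x^{***}}$ on $X^{***}$; and weak compactness of $T^*$ gives $\ran T^{***}\subset Y^*$, whence $\pi_Y T^{***}=T^{***}$, and combining with $\pi_Y T^{***}=T^{***}\pi_X$ we get the crucial identity $T^{***}=T^{***}\pi_X$. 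The final "moreover" clause about strict convexity of $\widetilde X^*$ is then verbatim the same invocation of Klee's renorming theorem as in Theorem~\ref{thm: eqv norm prop U} and needs no change.

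The genuinely new part is to check that $\widetilde X$ remains a canonical $(a,B,c)$-ideal, i.e. that
\[
|a x^{***}+b\,\pi_X x^{***}|+c\,|\pi_X x^{***}|\le |x^{***}|\qquad\forall b\in B,\ \forall x^{***}\in X^{***}.
\]
Here I would simply substitute the formula for $|\cdot|$ into the left-hand side. The term $|a x^{***}+b\,\pi_X x^{***}|$ expands to $\n{a x^{***}+b\,\pi_X x^{***}}+\n{T^{***}(a x^{***}+b\,\pi_X x^{***})}$, and using $T^{***}\pi_X=T^{***}$ the second summand collapses to $\n{(a+b)T^{***}x^{***}}=\n{a T^{***}x^{***}+b\,T^{***}\pi_X x^{***}}$. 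Likewise $c\,|\pi_X x^{***}|=c\n{\pi_X x^{***}}+c\n{T^{***}\pi_X x^{***}}=c\n{\pi_X x^{***}}+c\n{T^{***}x^{***}}$. Grouping the original-norm pieces and the $T^{***}$-norm pieces separately, the left-hand side equals
\[
\bigl(\n{a x^{***}+b\,\pi_X x^{***}}+c\n{\pi_X x^{***}}\bigr)+\bigl(\n{a\,T^{***}x^{***}+b\,T^{***}\pi_X x^{***}}+c\n{T^{***}\pi_X x^{***}}\bigr).
\]
The first bracket is $\le\n{x^{***}}$ because $X$ is a canonical $(a,B,c)$-ideal in $X^{**}$ and $\pi_X$ is its ideal projection. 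For the second bracket, apply the ideal inequality for $X$ to the element $T^{***}x^{***}\in X^{***}$ (legitimate since $T^{***}=T^{***}\pi_X$ means $T^{***}x^{***}$ lies where $\pi_X$ acts correctly), giving $\le\n{T^{***}x^{***}}$. Adding the two estimates yields $\n{x^{***}}+\n{T^{***}x^{***}}=|x^{***}|$, as required; and since $\pi_X$ is still the canonical projection and $\ran\pi_X=\widetilde X^*$ is unchanged as a set, the ideal is again canonical.

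The main obstacle, to the extent there is one, is purely bookkeeping: one must be careful that the identity $T^{***}=T^{***}\pi_X$ is invoked in precisely the right places so that the cross terms $T^{***}(bx^{***})$ and $b\,T^{***}\pi_X x^{***}$ are recognized as equal, and that the ideal inequality is legitimately applied to $T^{***}x^{***}$ as an element of $X^{***}$ rather than of $Y^{***}$. Once those identifications are made the whole estimate is a one-line splitting, and no convexity or compactness of $B$ beyond what is already in the hypothesis of "canonical $(a,B,c)$-ideal in $X^{**}$" is needed. I would present the argument as a short displayed chain of (in)equalities rather than a long narrative.
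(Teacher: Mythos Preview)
Your overall plan is exactly the paper's, and the setup (Klee, the Harmand--Rao isometry, $T^{***}=T^{***}\pi_X$, the ``moreover'' clause) is correctly reproduced. The splitting of the left-hand side into the two brackets
\[
\bigl(\n{a x^{***}+b\,\pi_X x^{***}}+c\n{\pi_X x^{***}}\bigr)
+\bigl(\n{a\,T^{***}x^{***}+b\,T^{***}\pi_X x^{***}}+c\n{T^{***}\pi_X x^{***}}\bigr)
\]
is also fine, and the first bracket is indeed $\le\n{x^{***}}$ by the canonical $(a,B,c)$-ideal hypothesis.

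The gap is in your treatment of the second bracket. You propose to ``apply the ideal inequality for $X$ to the element $T^{***}x^{***}\in X^{***}$,'' but $T^{***}\colon X^{***}\to Y^{***}$, so $T^{***}x^{***}$ lives in $Y^{***}$, not in $X^{***}$; the identity $T^{***}=T^{***}\pi_X$ does not change the target space. There is no $\pi_X$ acting on $Y^{***}$, and $X$ being an $(a,B,c)$-ideal in $X^{**}$ says nothing about elements of $Y^{***}$. So this step, as written, is a type error and does not go through.

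The paper's fix is the only missing ingredient, and it is short. Using $T^{***}\pi_X=T^{***}$ once more, the second bracket collapses to
\[
|a+b|\,\n{T^{***}x^{***}}+c\,\n{T^{***}x^{***}}=(|a+b|+c)\,\n{T^{***}x^{***}},
\]
so what you actually need is the \emph{scalar} inequality $|a+b|+c\le 1$ for every $b\in B$. This follows immediately from the original ideal inequality applied to any $x^*\in S_{X^*}$ (for which $\pi_X x^*=x^*$):
\[
|a+b|+c=\n{a x^*+b\,\pi_X x^*}+c\n{\pi_X x^*}\le\n{x^*}=1.
\]
With this in place your displayed chain finishes exactly as intended. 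Replace the sentence about ``applying the ideal inequality to $T^{***}x^{***}$'' by this scalar observation and the proof is complete and matches the paper's argument.
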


\begin{proof} 
The proof is essentially the same as the proof of Theorem~\ref{thm: eqv norm prop U}, except that here we need to show that $\widetilde{X}$ is a canonical $(a,B,c)$-ideal. This means that
\[
|ax^{***}+b\pi_X x^{***}|+c|\pi_X x^{***}|\le |x^{***}| \quad \forall b \in B
\]
holds for all $x^{***}$ in $X^{***}$.

Let $x^{***}\in X^{***}$. Then $\n{ax^{***}+b\pi_X x^{***}}+c\n{\pi_X x^{***}}\le \n{x^{***}}$ for all $b\in B$. Therefore, recalling that on $X^{***}$, the norm $|\cdot |$ is of the form $x^{***} \mapsto |x^{***}  |=\n{x^{***}}+\n{T^{***}x^{***}}$ and $T^{***}\pi_X =T^{***}$, we have
\begin{align*}
|ax^{***}+b\pi_X x^{***}|&+c|\pi_X x^{***}| \\
&= \n{ax^{***}+b\pi_X x^{***}}+\n{aT^{***}x^{***}+bT^{***}\pi_X x^{***}}\\
& \mbox{}\qquad +c\n{\pi_X x^{***}} +c\n{T^{***}\pi_X x^{***}}\\
&= \n{ax^{***}+b\pi_X x^{***}}+c\n{\pi_X x^{***}}\\
& \mbox{}\qquad+\n{aT^{***}x^{***}+bT^{***} x^{***}}+c\n{T^{***} x^{***}}\\
&\le \n{x^{***}}+|a+b|\n{T^{***}x^{***}}+c\n{T^{***}x^{***}}\\
&\le \n{x^{***}}+\n{T^{***}x^{***}}=|x^{***}|,
\end{align*}
because $|a+b|+c\le 1$ for every $b\in B$ (this can easily be verified by considering an arbitrary
$x^*\in S_{X^*}$).
\end{proof}

Our next result extends \cite[renorming result on p.~142 after Theorem~3]{FG} and \cite[Theorem~III.4.6(e)]{HWW} from $M$-ideals to strict $(a,B,c)$-ideals. Note that
Theorem~\ref{main theorem aBc}
applies in particular to $u$- and $h$-ideals.

\begin{thm}\label{main theorem aBc}
Let $a,c \ge 0$, and let $B$ be a compact set of scalars. Assume that a Banach space $X$ is a strict $(a,B,c)$-ideal in $X^{**}$. If $X$ has a shrinking \Marku\ basis, then $X$ admits an equivalent smooth norm
whose dual norm is strictly convex and under which $X$ becomes a canonical $(a,B,c)$-ideal in its bidual.
\end{thm}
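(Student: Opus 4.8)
The plan is to follow the three-step strategy from the proof of the Observation, exactly as was done in Theorem~\ref{main theorem prop U}, but now using Theorem~\ref{thm: eqv norm aBc} in place of Theorem~\ref{thm: eqv norm prop U} at the last step. So the skeleton is: take the shrinking \Marku\ basis $(x_i,f_i)_{i\in I}$ of $X$ (which we are given), normalize to $\n{x_i}=1$, and define $S\colon X^*\to c_0(I)$ by $x^*\mapsto(x^*(x_i))_{i\in I}$. As observed in the proof of Theorem~\ref{main theorem prop U}, $S$ is well-defined, injective, and weak$^*$-to-weak continuous, hence weakly compact; viewing $S$ as an operator $\widetilde S\colon X^*\to c_0(I)^{**}$ that is weak$^*$-to-weak$^*$ continuous, it is the adjoint of a weakly compact operator $T\colon c_0(I)^*\to X$. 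One then equips $c_0(I)$ with Day's strictly convex norm, so that $Z:=c_0(I)$ (renormed) is a strictly convex space with $\ran T^*\subset Z\subset c_0(I)^{**}=Y^*$ where $Y=c_0(I)^*$, and $T^*=\widetilde S$ is injective because $S$ is. All the hypotheses of Theorem~\ref{thm: eqv norm aBc} are now met.

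Applying Theorem~\ref{thm: eqv norm aBc} directly gives the equivalent dual norm $|x^*|=\n{x^*}+\n{T^*x^*}$ on $X^*$, under which $\widetilde X=(X,|\cdot|)$ is a canonical $(a,B,c)$-ideal in its bidual and $\widetilde X^*$ is strictly convex. Since a space whose dual is strictly convex is smooth, $|\cdot|$ is the desired equivalent smooth norm whose dual norm is strictly convex. That finishes everything the statement asks for.

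There is one genuine gap compared to Theorem~\ref{main theorem prop U}: there, property~$U$ in the bidual gave us, via Theorem~\ref{Asplund}, that $X$ is Asplund, which combined with WCG yielded the shrinking \Marku\ basis via Theorem~\ref{eqv: m-basis wcg}. Here we are simply handed the shrinking \Marku\ basis as a hypothesis, so that reduction is unnecessary; but we do need to know that the \emph{strict} $(a,B,c)$-ideal hypothesis suffices for Theorem~\ref{thm: eqv norm aBc}, which is stated for \emph{canonical} $(a,B,c)$-ideals. The point is that once $X$ has a shrinking \Marku\ basis it is in particular Asplund (from $\overline{\spann}(f_i)=X^*$ one checks separable subspaces have separable duals), and for Asplund spaces, or more directly once one knows $B_X$ is weak$^*$ dense in $B_{X^{**}}$ in the relevant sense, a strict ideal in the bidual is automatically canonical --- this is the standard fact that a strict ideal with norming range forces $P=\pi_X$ when the predual determines the norm. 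I would insert a short lemma or remark citing \cite{GKS} or \cite{OP} to the effect that a strict $(a,B,c)$-ideal in its bidual with a shrinking \Marku\ basis (hence Asplund) is canonical, and then quote Theorem~\ref{thm: eqv norm aBc}.

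The main obstacle is exactly this passage from strict to canonical: one must be careful that the strictness hypothesis, together with the structural consequences of having a shrinking \Marku\ basis, really does pin down the ideal projection on $X^{***}$ to be $\pi_X$. If that implication needed the full strength of some WCG-type argument it would be worth spelling out; I expect, however, that it follows from the classical observation that for an Asplund space $X$ a contractive projection $P$ on $X^{***}$ with $\ker P=X^\perp$ and norming range must coincide with $\pi_X$, because the predual $X^{**}$ separates points of $X^{***}$ appropriately. Everything else --- the construction of $S$ and $T$, the use of Day's norm, the verification of the hypotheses of Theorem~\ref{thm: eqv norm aBc} --- is routine and parallels the proof of Theorem~\ref{main theorem prop U} verbatim.
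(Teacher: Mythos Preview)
Your approach is essentially the paper's: reduce strict to canonical, then rerun the proof of Theorem~\ref{main theorem prop U} with Theorem~\ref{thm: eqv norm aBc} in place of Theorem~\ref{thm: eqv norm prop U}. The construction of $S$, $T$, and the use of Day's norm are identical to what the paper does.

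The one place where your write-up is imprecise is the strict $\Rightarrow$ canonical step. Your heuristic (``$B_X$ is weak$^*$ dense in $B_{X^{**}}$'', ``the predual $X^{**}$ separates points of $X^{***}$'') does not actually do the job: Goldstine's theorem and separation of points hold for every Banach space, yet strict ideals in the bidual need not be canonical in general. What the paper uses is Proposition~\ref{strict and canonical}: from the shrinking \Marku\ basis one gets (via Theorem~\ref{eqv: m-basis wcg}) that $X$ is Asplund, hence $X$ does not contain $\ell_1$; then Godefroy--Kalton \cite[Corollary~5.5]{GK} guarantees that $X^{***}$ has a \emph{minimal} norming subspace, which is forced to be $X^*$, and this pins down $\ran P=X^*$, i.e., $P=\pi_X$. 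So the correct citation is \cite{GK}, not \cite{GKS} or \cite{OP} (the relevant part of \cite{GKS} only covers $u$- and $h$-ideals), and the key mechanism is the minimal-norming-subspace argument rather than any generic separation property.
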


The proof of Theorem~\ref{main theorem aBc} uses the following result that relies on \cite{GK} and extends \cite[Proposition~5.2, (1) and~(2)]{GKS}, where $u$- and $h$-ideals were considered. 

\begin{prop}\label{strict and canonical}
Let a Banach space $X$ be a strict $(a,B,c)$-ideal in $X^{**}$. If $X$ does not contain $l_1$ isomorphically, then $X$ is a canonical $(a,B,c)$-ideal in $X^{**}$.
\end{prop}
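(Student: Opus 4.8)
The plan is to reduce the problem to the already-known $u$- and $h$-ideal cases treated in \cite[Proposition~5.2]{GKS} by isolating the one analytic ingredient that makes those proofs work: the deduction, for a strict ideal $X$ in $X^{**}$, that the ideal projection $P$ on $X^{***}$ actually \emph{equals} $\pi_X$ once $X$ contains no isomorphic copy of $l_1$. First I would recall that since $X$ is a strict ideal in $X^{**}$, the range $\ran P \subset X^{***}$ is a norming subspace for $X^{**}$ with $\ker P = X^\perp$, so $\ran P$ separates the points of $X^{**}$ and $P$ is a contractive projection complementing $X^\perp$. The subspace $\ran P$ is thus an isometric preimage of $X^*$ under the restriction map $X^{***}\to X^*$, and one has the general identity $P = \pi_X$ precisely when every element of $\ran P$ is weak$^*$ continuous on $X^{**}$, i.e.\ lies in the canonical image of $X^*$.

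The heart of the matter is then the following: under the hypothesis that $X$ does not contain $l_1$ isomorphically, every bounded functional on $X^{**}$ belonging to a norming subspace that complements $X^\perp$ must be weak$^*$ continuous. Here is where I would invoke \cite{GK}: by the Odell--Rosenthal-type characterization (and the machinery used in \cite{GK}), if $X$ is $l_1$-free then $X^{**}$ coincides with the space of weak$^*$-limits of \emph{sequences} from $X$, and in fact $B_{X^{**}}$ is the weak$^*$-sequential closure of $B_X$; combined with the fact that $\ran P$ is norming and $P$-invariant, a functional $x^{***}\in \ran P$ that agrees with $\pi_X x^{***}$ on the dense-in-the-right-sense set $X$ must agree on all of $X^{**}$, giving $x^{***}=\pi_X x^{***}$ and hence $\ran P = \pi_X X^{***}$, i.e.\ $P=\pi_X$. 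Once $P=\pi_X$, the defining inequality of a strict $(a,B,c)$-ideal,
\[
\n{ax^{***}+b P x^{***}} + c\n{P x^{***}} \le \n{x^{***}} \quad \forall b\in B,\ \forall x^{***}\in X^{***},
\]
becomes verbatim the defining inequality of a canonical $(a,B,c)$-ideal, and we are done.

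I expect the main obstacle to be exactly the step that upgrades ``$\ran P$ is norming and complements $X^\perp$'' to ``$\ran P$ consists of weak$^*$-continuous functionals'' — in other words, showing that an $l_1$-free strict ideal has only one ideal projection, namely $\pi_X$. The $u$- and $h$-ideal arguments in \cite[Proposition~5.2]{GKS} use the extra structure of those ideals ($P - I$ or a rotation of it has norm $1$) to control $P$; replacing that with the weaker $(a,B,c)$-inequality means I must extract whatever is genuinely needed from \cite{GK} about bounded functionals on the bidual of an $l_1$-free space, and check that the $(a,B,c)$-inequality is strong enough to run the same uniqueness argument. A secondary, more routine point is to confirm that passing from $P$ to $\pi_X$ does not disturb the compact parameter set $B$ or the constants $a,c$ — but this is immediate since the inequality is preserved termwise under the substitution $P\mapsto\pi_X$.
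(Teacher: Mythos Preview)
Your overall framework is right: the whole point is to prove $P=\pi_X$, equivalently $\ran P = X^*$, and once that is done the $(a,B,c)$-inequality transfers verbatim. But the analytic step you propose for getting $\ran P\subset X^*$ does not go through as written. The Odell--Rosenthal sequential density of $B_X$ in $B_{X^{**}}$ requires $X$ separable, which is not assumed here; and even granting it, elements of $X^{***}$ are not weak$^*$ sequentially continuous in general, so the fact that $x^{***}$ and $\pi_X x^{***}$ agree on $X$ (which is automatic, since their difference lies in $\ker\pi_X=X^\perp$) cannot be propagated to all of $X^{**}$ by a density argument. You recognise this as the ``main obstacle,'' but the route you sketch to overcome it is circular: you would need $x^{***}\in\ran P$ to be weak$^*$ continuous, which is exactly the conclusion.

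The paper's argument is both simpler and different in spirit, and it explains why the $(a,B,c)$-structure plays no role beyond ``strict ideal.'' One invokes \cite[Corollary~5.5]{GK}: if $X$ does not contain $l_1$, then $X^{***}$ has a \emph{minimal} norming subspace (the intersection of all norming subspaces). A two-line Hahn--Banach argument shows that no proper subspace of $X^*$ can norm $X^{**}$, so $X^*$ is that minimal subspace and hence $X^*\subset\ran P$. From $X^*\subset\ran P$ and $\ker P=X^\perp$ one reads off $\ran P\subset X^*$ immediately (write $x^{***}=x^*+x^\perp$; then $Px^{***}=Px^*=x^*$). Thus $\ran P=X^*$ and $P=\pi_X$. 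Note that this uses nothing about $a$, $B$, or $c$; your worry that one must ``check that the $(a,B,c)$-inequality is strong enough to run the same uniqueness argument'' is unfounded, and your reading of \cite[Proposition~5.2]{GKS} as relying on $\n{I-2P}=1$ at this step is off --- the same minimal-norming-subspace scheme is what drives that proof too.
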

\begin{proof}
The proof follows the scheme of the proof of \cite[Proposition~5.2, (1) and~(2)]{GKS}.

By assumption, there is an $(a,B,c)$-ideal projection $P$ on $X^{***}$ such that $\ker P =X^\perp$ and $\ran P$ is a norming subspace of $X^{***}$. It suffices to show that $\ran P =X^*$, because then $P=\pi_X$ (recall that $\ker \pi_X=X^\perp$ and $\ran \pi_X=X^*)$, meaning that $X$ is a canonical $(a,B,c)$-ideal in $X^{**}$.

Since $X$ does not contain $l_1$ isomorphically, we get from \cite[Corollary~5.5]{GK} that $X^{***}$ contains a minimal norming subspace, which is, by definition, the intersection of all norming subspaces of $X^{***}$. We know that $X^*$ is norming in $X^{***}$. On the other hand, a proper subspace $U$ of $X^*$ cannot be norming in $X^{***}$. Indeed, assume, for the sake of contradiction, that such a $U$ is norming in $X^{***}$. By the Hahn--Banach theorem, there is $x^{**} \in X^{**}$ such that $x^{**}(u)=0$ for every $u\in U$, but $x^{**}\neq 0$. Since $U$ is norming in $X^{***}$, we get
\[
\n{x^{**}}=\sup_{u\in B_U}|x^{**}(u)|=0,
\]
which is a contradiction.

Therefore, $X^*$ is the minimal norming subspace, and thus $X^*\subset \ran P$. Since now also $\ran P \subset X^*$ (indeed, if $x^{***}=x^* +x^\perp \in X^{***}$ with $x^{*}\in X^{*}$, $x^\perp \in X^\perp$, then $Px^{***}=Px^*=x^*$, because $X^*\subset \ran P$), we have $\ran P=X^*$, as desired.
%
\end{proof}

\begin{proof}[Proof of Theorem~\ref{main theorem aBc}]

Since $X$ has a shrinking \Marku\ basis, by Theorem~\ref{eqv: m-basis wcg}, $X$ is Asplund. An Asplund space cannot contain $l_1$ isomorphically, and thus, by Proposition~\ref{strict and canonical}, $X$ is a canonical $(a,B,c)$-ideal.


Now, using Theorem~\ref{thm: eqv norm aBc} (instead of Theorem~\ref{thm: eqv norm prop U}) in the proof of Theorem~\ref{main theorem prop U}, this immediately yields an equivalent smooth norm $|\cdot |$ on $X$ whose dual norm is strictly convex and for which  $X$ is still a canonical $(a,B,c)$-ideal in its bidual.
\end{proof}

If the latter proof is carried out under the supplementary assumption of property $U$, then one obtains the following partial positive answer to Problem~\ref{problem: aBc}, which is quite similar to Theorem~\ref{main theorem prop U}.

\begin{thm}\label{thm: U and aBc}
Let $a,c\ge0$, and let $B$ be a compact set of scalars.
Assume that a Banach space $X$ is a strict $(a,B,c)$-ideal with property~$U$ in $X^{**}$. If $X$ is WCG, then $X$ has a shrinking \Marku\ basis and $X$ admits an equivalent very smooth norm under which $X$ becomes a totally smooth canonical $(a,B,c)$-ideal in its bidual.
\end{thm}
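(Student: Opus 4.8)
The plan is to run the proof of Theorem~\ref{main theorem aBc} while carrying the hypothesis of property~$U$ along; almost everything needed is already packaged in the earlier results. First I would extract the structural consequences of the assumptions. Since $X$ has property~$U$ in $X^{**}$, Theorem~\ref{Asplund} gives that $X$ is Asplund, and together with the WCG assumption, Theorem~\ref{eqv: m-basis wcg} provides a shrinking \Marku\ basis $(x_i,f_i)_{i\in I}$; this already proves the first assertion. Moreover, an Asplund space cannot contain $l_1$ isomorphically, so Proposition~\ref{strict and canonical} upgrades the hypothesis from \emph{strict} to \emph{canonical}: $X$ is a canonical $(a,B,c)$-ideal in $X^{**}$.

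Next I would construct the renorming exactly as in the proof of Theorem~\ref{main theorem prop U}. After normalizing $\n{x_i}=1$, set $S\colon X^*\to c_0(I)$, $x^*\mapsto(x^*(x_i))_{i\in I}$; it is well-defined, injective, and weak$^*$-to-weak continuous, hence weakly compact, and, viewed as an operator $\tilde S$ into $c_0(I)^{**}$, it is the adjoint of a weakly compact operator $T\colon c_0(I)^*\to X$. Now equip $c_0(I)$ with Day's strictly convex norm (and take $Y=c_0(I)^*$ with the resulting dual norm). Then $T$ is weakly compact, $T^*=\tilde S$ is injective, and $\ran T^*$ lies in the canonical --- hence isometric and therefore strictly convex --- copy of $(c_0(I),\text{Day's norm})$ inside $Y^*=c_0(I)^{**}$. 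So $T$ satisfies the hypotheses, including the ``moreover'' part, of \emph{both} Theorem~\ref{thm: eqv norm prop U} and Theorem~\ref{thm: eqv norm aBc}, and crucially for the \emph{same} equivalent norm $|x^*|:=\n{x^*}+\n{T^*x^*}$. Applying Theorem~\ref{thm: eqv norm aBc} (using that $X$ is now canonical) gives that $\widetilde{X}:=(X,|\cdot|)$ is a canonical $(a,B,c)$-ideal in $\widetilde{X}^{**}$ with $\widetilde{X}^*$ strictly convex; applying Theorem~\ref{thm: eqv norm prop U} gives that $\widetilde{X}$ still has property~$U$ in its bidual.

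It then remains to read off total smoothness and very smoothness. Since $\widetilde{X}$ has property~$U$ in its bidual and $\widetilde{X}^*$ is strictly convex, Theorem~\ref{thm: totally smooth} gives that $\widetilde{X}$ is totally smooth in its bidual. For very smoothness: strict convexity of $\widetilde{X}^*$ makes $\widetilde{X}$ smooth, so its support mapping $S_{\widetilde{X}}\to S_{\widetilde{X}^*}$ is norm-to-weak$^*$ continuous; by Lemma~\ref{lem: w and w* top agree}, property~$U$ forces the weak and weak$^*$ topologies to agree on $S_{\widetilde{X}^*}$, so that mapping is in fact norm-to-weak continuous, i.e., $\widetilde{X}$ is very smooth. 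This completes the argument.

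There is no genuinely hard new step here; the proof is bookkeeping on top of Theorems~\ref{thm: eqv norm prop U}, \ref{thm: eqv norm aBc}, \ref{main theorem prop U}, \ref{main theorem aBc}, Proposition~\ref{strict and canonical}, and Lemma~\ref{lem: w and w* top agree}. The two points that need care are: (a) checking that the single renorming $|\cdot|$ simultaneously satisfies the conclusions of Theorems~\ref{thm: eqv norm prop U} and~\ref{thm: eqv norm aBc} --- in particular that the operator $T$ built from Day's norm really verifies the injectivity and strict-convexity hypotheses, which hinges on $\ran T^*$ sitting inside the isometric copy of $c_0(I)$ rather than merely inside $c_0(I)^{**}$; and (b) the passage from strict to canonical, which uses the $l_1$-free property that comes for free from the Asplund conclusion of Theorem~\ref{Asplund}.
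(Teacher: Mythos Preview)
Your proposal is correct and follows exactly the approach the paper intends: the paper's ``proof'' of this theorem is simply the sentence preceding it, namely that one runs the proof of Theorem~\ref{main theorem aBc} under the additional hypothesis of property~$U$, and you have accurately unpacked that into the constituent steps (Asplund via Theorem~\ref{Asplund}, shrinking \Marku\ basis via Theorem~\ref{eqv: m-basis wcg}, strict $\Rightarrow$ canonical via Proposition~\ref{strict and canonical}, the common renorming via Theorems~\ref{thm: eqv norm prop U} and~\ref{thm: eqv norm aBc}, total smoothness via Theorem~\ref{thm: totally smooth}, and very smoothness via Lemma~\ref{lem: w and w* top agree}). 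Your explicit observation that the \emph{same} operator $T$ and hence the \emph{same} norm $|\cdot|$ feeds both Theorem~\ref{thm: eqv norm prop U} and Theorem~\ref{thm: eqv norm aBc} is precisely the point the paper leaves implicit.
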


For separable spaces, we can again omit the WCG-assumption.

\begin{cor}
Let $a,c \ge 0$, and let $B$ be a compact set of scalars.
Assume that a separable Banach space $X$ is a strict $(a,B,c)$-ideal with property~$U$ in $X^{**}$. Then $X$ admits an equivalent very smooth norm under which $X$ is a totally smooth canonical $(a,B,c)$-ideal in its bidual.
\end{cor}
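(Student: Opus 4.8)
The plan is to deduce this corollary immediately from Theorem~\ref{thm: U and aBc} by checking that its only extra hypothesis, namely that $X$ be WCG, is automatic in the separable setting; all the other hypotheses (strict $(a,B,c)$-ideal, property~$U$ in the bidual) are literally the same.

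First I would recall the standard fact that every separable Banach space $X$ is weakly compactly generated: if $(x_n)\noo$ is a dense sequence in $B_X$, then
\[
K:=\set{x_n/n\colon n\in\N}\cup\set{0}
\]
is a norm-compact, hence weakly compact, subset of $X$ with $\overline{\spann}(K)=X$, so $X$ is WCG by definition (cf.\ also Theorem~\ref{Amir--Lindenstrauss}). Then, since $X$ is a strict $(a,B,c)$-ideal with property~$U$ in $X^{**}$ and $X$ is WCG, Theorem~\ref{thm: U and aBc} applies verbatim and yields that $X$ has a shrinking \Marku\ basis and admits an equivalent very smooth norm under which $X$ becomes a totally smooth canonical $(a,B,c)$-ideal in its bidual. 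This is exactly the assertion of the corollary.

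There is essentially no obstacle here; the only thing to flag is that one really does need nothing beyond ``separable $\Rightarrow$ WCG'' to pass from the theorem to the corollary. If one prefers to bypass WCG explicitly, there is a parallel route: by Theorem~\ref{Asplund} a space with property~$U$ in its bidual is Asplund, so for separable $X$ the dual $X^*$ is separable, whence $X$ carries a shrinking \Marku\ basis by the argument of \cite[Proposition~1.f.3]{LT1}; feeding this basis into the proof of Theorem~\ref{main theorem aBc}, together with the property~$U$ bookkeeping supplied by Theorem~\ref{thm: eqv norm aBc} exactly as in the proof of Theorem~\ref{thm: U and aBc}, produces the same renorming. Either way the corollary follows with no new ideas.
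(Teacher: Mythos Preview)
Your proposal is correct and matches the paper's implicit reasoning: the corollary is stated without proof, immediately after the remark ``For separable spaces, we can again omit the WCG-assumption,'' and earlier the paper already observed that separable Banach spaces are WCG when deducing the analogous corollary from Theorem~\ref{main theorem prop U}. Your alternative route via separable dual and \cite[Proposition~1.f.3]{LT1} is also mentioned in the paper as a shortcut for the separable case.
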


As it was recalled in Section~3, every Asplund $U^*$-space has a shrinking \Marku\ basis. Since this property is preserved under isomorphisms, Theorem~\ref{thm: U and aBc} immediately implies the following.

\begin{cor} \label{cor: prop U and isom to U*}
Let $a,c\ge 0$
and let $B$ be a compact set of scalars. Assume that a Banach space $X$ is a strict $(a,B,c)$-ideal with property~$U$ in $X^{**}$. If $X$ is isomorphic to a $U^*$-space, then $X$ admits an equivalent very smooth norm under which $X$ is a totally smooth canonical $(a,B,c)$-ideal in its bidual.
\end{cor}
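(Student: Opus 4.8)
The plan is to deduce Corollary~\ref{cor: prop U and isom to U*} directly from Theorem~\ref{thm: U and aBc} by verifying that its hypothesis ``$X$ is WCG'' is implied by the new hypothesis ``$X$ is isomorphic to a $U^*$-space''. First I would recall, as noted in Section~3 following \cite[proof of Theorem~4.4]{CN1}, that every Asplund $U^*$-space has a shrinking \Marku\ basis and is WCG. So the key step is to argue that $X$ is an Asplund $U^*$-space, after which being WCG is automatic (alternatively one can extract the shrinking \Marku\ basis directly and feed it to Theorem~\ref{thm: U and aBc}, whose conclusion only uses WCG through the existence of such a basis).

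Next I would observe that $X$ is Asplund: this is immediate from Theorem~\ref{Asplund}, since $X$ has property~$U$ in $X^{**}$ by assumption. Then I would invoke the fact that the class of Banach spaces admitting a shrinking \Marku\ basis is stable under isomorphisms, which was already used in the paragraph preceding the corollary. Let $W$ be a $U^*$-space isomorphic to $X$; since $W$ is isomorphic to the Asplund space $X$, $W$ is itself Asplund, hence $W$ is an Asplund $U^*$-space and therefore has a shrinking \Marku\ basis. Transporting this \Marku\ basis through the isomorphism $X \cong W$ (or equivalently noting that WCG-ness is isomorphically invariant, so $X$ is WCG because $W$ is), we conclude that $X$ has a shrinking \Marku\ basis and is WCG.

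Finally I would apply Theorem~\ref{thm: U and aBc} to $X$ itself: $X$ is a strict $(a,B,c)$-ideal with property~$U$ in $X^{**}$ and is WCG, so $X$ admits an equivalent very smooth norm under which $X$ is a totally smooth canonical $(a,B,c)$-ideal in its bidual. This is exactly the asserted conclusion, so the proof is complete.

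As for the main obstacle: there really is none of a technical nature, since all the substantive work has been done in Theorem~\ref{thm: U and aBc}; the only point requiring a moment's care is making precise that ``having a shrinking \Marku\ basis'' (equivalently, for Asplund spaces, ``being WCG'' by Theorem~\ref{eqv: m-basis wcg}) is an isomorphic invariant, and that the $U^*$ property of the isomorphic copy is used only to supply Asplundness-plus-\Marku-basis rather than being needed for $X$ directly. Hence the corollary is an essentially formal consequence of the preceding results.
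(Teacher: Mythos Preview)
Your proposal is correct and follows essentially the same route as the paper: the paper's ``proof'' is the single sentence preceding the corollary, which notes that every Asplund $U^*$-space has a shrinking \Marku\ basis, that this property is preserved under isomorphisms, and that Theorem~\ref{thm: U and aBc} then applies. Your argument spells out exactly these steps (using Theorem~\ref{Asplund} to get Asplundness of $X$, passing it to the isomorphic $U^*$-space $W$, extracting the shrinking \Marku\ basis or equivalently WCG-ness, and transporting back), so there is nothing to add.
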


Similarly to Theorem~\ref{thm: U and aBc}, we can apply Theorem~\ref{main theorem aBc} to obtain the following result. However, here we need to use a couple of auxiliary results from the literature.

\begin{thm}\label{thm: isom to U*}
\begin{sloppypar}Let a Banach space $X$ be a strict $(a,B,c)$-ideal in $X^{**}$ with {$\max\{|b|\colon b \in B\}+c>1$}. If $X$ is isomorphic to a $U^*$-space, then $X$ admits an equivalent smooth norm
whose dual norm is strictly convex and under which $X$ becomes a canonical $(a,B,c)$-ideal in its bidual.
\end{sloppypar}
\end{thm}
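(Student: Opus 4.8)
The plan is to obtain this in exactly the way Corollary~\ref{cor: prop U and isom to U*} was obtained, but replacing the use of property~$U$ (which there forced Asplundness through Theorem~\ref{Asplund}) and of Theorem~\ref{thm: U and aBc} by the hypothesis $\max\{|b|\colon b\in B\}+c>1$ and by Theorem~\ref{main theorem aBc}. Theorem~\ref{main theorem aBc} already produces, for a strict $(a,B,c)$-ideal in its bidual that \emph{carries a shrinking \Marku\ basis}, an equivalent smooth norm with strictly convex dual under which the space is a canonical $(a,B,c)$-ideal in its bidual; so the whole task is to equip $X$ with a shrinking \Marku\ basis. Since that property is an isomorphic invariant and $X$ is isomorphic to a $U^*$-space $W$, it suffices to produce such a basis on $W$, and, as recalled in Section~3 (via \cite{FG} and \cite[proof of Theorem~4.4]{CN1}), every \emph{Asplund} $U^*$-space has one. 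So everything comes down to showing that $W$ --- equivalently $X$, Asplundness being an isomorphic invariant --- is Asplund.

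This is the one point where the assumption $\max\{|b|\colon b\in B\}+c>1$ is used, and it enters via the literature. First, a strict $(a,B,c)$-ideal in its bidual with $\max\{|b|\colon b\in B\}+c>1$ does not contain an isomorphic copy of $l_1$; this is the $(a,B,c)$-ideal analogue of the corresponding statements for $u$- and $h$-ideals in \cite{GKS} (see also \cite{OP}). Transferring ``does not contain $l_1$'' across the isomorphism, the $U^*$-space $W$ does not contain $l_1$ either; and the second auxiliary result --- that a $U^*$-space not containing $l_1$ is an Asplund space (see \cite{CN1}) --- then shows that $W$, and hence $X$, is Asplund. By the first paragraph $X$ now carries a shrinking \Marku\ basis, so Theorem~\ref{main theorem aBc} applies verbatim and gives an equivalent smooth norm on $X$ whose dual norm is strictly convex and under which $X$ is a canonical $(a,B,c)$-ideal in its bidual, as asserted.

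I expect the main obstacle to be not analytic but organisational: the argument rests entirely on correctly locating and combining the two external inputs above. The points that need care are that the $l_1$-avoidance statement requires only \emph{strictness} of the ideal --- we may not presuppose canonicity (that will follow from Proposition~\ref{strict and canonical}, but only once Asplundness is in hand) --- that the \emph{strict} inequality $\max\{|b|\colon b\in B\}+c>1$ is exactly what that statement needs (so the hypothesis cannot be relaxed to ``$\ge 1$'' on this route), and that each of ``does not contain $l_1$'', ``is Asplund'' and ``has a shrinking \Marku\ basis'' is an isomorphic invariant, so that it may be checked on whichever of $X$ and $W$ is more convenient.
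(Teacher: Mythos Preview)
Your overall strategy---reduce to Theorem~\ref{main theorem aBc} by producing a shrinking \Marku\ basis, and do that by showing $X$ is Asplund and then invoking the ``Asplund $U^*$-space $\Rightarrow$ shrinking \Marku\ basis'' fact across the isomorphism---is exactly the paper's. The gap is in how you get Asplundness: you have the roles of the two hypotheses reversed, and the two ``auxiliary results from the literature'' you invoke are not the ones that are actually available.

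Your Claim~A (a strict $(a,B,c)$-ideal in its bidual with $\max_{b\in B}|b|+c>1$ contains no copy of $l_1$) is not what \cite{GKS} or \cite{OP} provide in this generality; the result that \emph{is} available, and that the paper uses, is for \emph{canonical} such ideals (see \cite[proof of Theorem~4.1]{O6} or \cite[Lemma~4.2]{OZ}), and it gives Asplundness directly. Your Claim~B (a $U^*$-space not containing $l_1$ is Asplund) is more serious: since \cite[Proposition~4.1]{CN1} already says that \emph{every} $U^*$-space fails to contain $l_1$, your Claim~B would amount to ``every $U^*$-space is Asplund'', which is not in \cite{CN1} and is not known. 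You also misread Proposition~\ref{strict and canonical}: it needs ``does not contain $l_1$'', not Asplundness, so canonicity is available \emph{before} Asplundness, not after.

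The paper's route fixes both issues by swapping the inputs: use the $U^*$-hypothesis (via \cite[Proposition~4.1]{CN1}) to exclude $l_1$ from $W$ and hence from $X$; then Proposition~\ref{strict and canonical} upgrades the strict $(a,B,c)$-ideal $X$ to a canonical one; and \emph{now} the hypothesis $\max_{b\in B}|b|+c>1$ is used, via \cite{O6} or \cite{OZ}, to conclude that the canonical $(a,B,c)$-ideal $X$ is Asplund. From there your first paragraph goes through verbatim.
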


\begin{proof}
A $U^*$-space does not contain $l_1$ isomorphically. This fact was observed in \cite[Proposition~4.1]{CN1} as a direct consequence of \cite[Proposition~2.6]{GKS}. Hence, $X$ does not contain $l_1$ isomorphically, and therefore, by Proposition~\ref{strict and canonical}, our $(a,B,c)$-ideal $X$ is canonical. But canonical $(a,B,c)$-ideals with $B$ and $c$ as above
are Asplund spaces (see \cite[proof of Theorem~4.1]{O6} or \cite[Lemma~4.2]{OZ}). Hence, $X$ has a shrinking \Marku\ basis, and Theorem~\ref{main theorem aBc} applies.
\end{proof}

Note that the above $(a,B,c)$-ideal assumption is satisfied in all important cases, including $M$-, $u$-, and $h$-ideals.

Clearly, $M$-ideals in their biduals and, more generally, the canonical $(1,\{-1\},c)$-ideals with $c\in (0,1]$ are $U^*$-spaces. Hence, from Theorem~\ref{thm: isom to U*}, we have the following example. 

\begin{ex}\label{ex:(1,1,c)}
Let $X$ be a canonical $(1,\{-1\},c)$-ideal in $X^{**}$ with $c\in (0,1]$. Then $X$ admits an equivalent smooth norm whose dual norm is strictly convex and under which $X$ is a canonical $(1,\{-1\},c)$-ideal in its bidual.
\end{ex}
\begin{rem}
A particular example of a $(1,\{-1\},c)$-ideal is provided by certain
renormings of the James space~$J$, as shown in \cite[Example~3.5]{CN1}.
Namely, for $\delta>\sqrt2$ the renorming $J_\delta$ of the James space in
\cite[Example~3.5]{CN1} is a canonical $(1,\{-1\},c)$-ideal in $J_\delta^{**}$ if
\[
\max \Bigl\{ \frac{(1+\delta c)^2}{\delta^2},
\frac{(1+\delta c)^2+ (1+ c)^2 + 2(\delta c)^2}{2\delta^2} \Bigr\}
<\frac12.
\]
(We take this opportunity to point out a disturbing typo in \cite{CN1}
where the denominator in the first item of the maximum is $2$ instead of
$\delta^2$.)
It is easy to see that for each $c<1/\sqrt3$, there is some
$\delta>\sqrt2$ satisfying the above inequality.

The point of this remark is that the James space~$J$ cannot be renormed to be
an $M$-ideal in its bidual, since it is  non-reflexive and its bidual is
separable, being isomorphic to $J$. However, a non-reflexive space that is
an $M$-ideal in its bidual contains a copy of $c_0$ (see \cite{HL} or, e.g, \cite[Corollary~III.3.7(a)]{HWW};
hence the renormed James space $J_\delta$ is a nontrivial instance of
Example~\ref{ex:(1,1,c)}.
\end{rem}

Let us conclude the paper with a couple of examples where Corollary~\ref{cor: prop U and isom to U*} applies.

It is known (see \cite[Proposition~2.2]{CN1}) that the space in Example~\ref{ex: l_p sum c_0} is a canonical $u$-ideal in its bidual. Using Corollary~\ref{cor: prop U and isom to U*} (instead of Theorem~\ref{main theorem prop U}) in the proof of Example~\ref{ex: l_p sum c_0} allows us to strengthen this -- using the same norm as in Example~\ref{ex: l_p sum c_0} --
as follows. Note that canonical $u$-ideals in their biduals could be considered as the closest important weakenings of $M$-ideals in their biduals.

\begin{ex}
Let $\Gamma$ be an infinite set and let $1<p<\infty$. The $l_p$-sum $l_p(c_0(\Gamma))$ admits an equivalent very smooth norm under which it is a totally smooth canonical $u$-ideal in its bidual. But $l_p(c_0(\Gamma))$  cannot be equivalently renormed to be an $M$-ideal in its bidual. 
\end{ex}

Our last Example~\ref{ex: last} will concern a large class of spaces. We need some more notation. For Banach spaces $X$ and $Y$, we denote by $\mathcal{L}(X,Y)$ the Banach space of bounded linear operators from $X$ to $Y$, and by $\mathcal{K}(X,Y)$ its subspace of compact operators. We write $\mathcal{L}(X)$ and $\mathcal{K}(X)$, respectively, if $X=Y$. Recall that a net $(K_\alpha)$ in $\mathcal{K}(X)$ is a \emph{compact approximation of the identity} (CAI) provided $K_\alpha x\to x$ for all $x \in X$. If, moreover, $K_\alpha ^* x^*\to x^*$ for all $x^* \in X^*$, then $(K_\alpha)$ is called a \emph{shrinking CAI}. If $X$ has a CAI such that the convergence is uniform on compact subsets of $X$, then $X$ is said to have the \emph{compact approximation property} (CAP).

Let $1<p<\infty$. A Banach space $X$ is said to have the \emph{upper $p$-property} (cf. \cite[Proposition~1.1]{OW} or \cite[pp.~306 and 327]{HWW}) if $X$ admits a shrinking CAI $(K_\alpha)$ such that 
\[
\limsup_\alpha \sup_{x,y \in B_X} \n{K_\alpha x +(y-K_\alpha y)}\le (\n{x}^p +\n{y}^p)
^{1/p}.
\]

If $X$ and $Y$ are both reflexive Banach spaces, and $X$ or $Y$ has the CAP, then $\mathcal{K}(X,Y)^{**}=\mathcal{L}(X,Y)$ (see \cite[Corollary~1.3]{GS}; in the AP-case, this is a well-known result due to Grothendieck).
There is a vast literature studying the position of $\mathcal{K}(X,Y)$ in $\mathcal{L}(X,Y)$ in terms of ideals (see, e.g., \cite{HJO} for recent results and a large set of references). We are going to use \cite[Theorem~4.4 and Corollary~4.5]{CN2}, from which one can see that $\mathcal{K}(X,Y)$  is a $U^*$-space with property $U$ and also a strict $(a,\{-a\},c)$-ideal for all $a,c>0$ such that $a^p+c^p\le 1$ in  $\mathcal{L}(X,Y)$, whenever $X$ is an arbitrary Banach space and $Y$ is a Banach space having the upper $p$-property. Thanks to Corollary~\ref{cor: prop U and isom to U*}, we have the following rather general example.

\begin{ex}\label{ex: last}
Let $X$ and $Y$ be reflexive Banach spaces and let $1<p<\infty$. If $Y$ has the upper $p$-property, then $\mathcal{K}(X,Y)$ admits an equivalent very smooth norm under which $\mathcal{K}(X,Y)$ is a totally smooth canonical $(a,\{-a\},c)$-ideal in its bidual for all $a,c>0$ such that $a^p+c^p\le 1$.
\end{ex}

Besides the $l_p(\Gamma)$ spaces and the Lorentz sequence spaces $d(w,p)$, there are many reflexive spaces enjoying the upper $p$-property, as one can see from \cite[pp.~306,~327]{HWW}. In the context of Example~\ref{ex: last}, we are interested in cases when $\mathcal{K}(X,Y)$ is \emph{not} an $M$-ideal in its bidual $\mathcal{L}(X,Y)$. Here the classical example, due to Hennefeld (see \cite{H} or, e.g., \cite[p.~305]{HWW}), is that $\mathcal{K}(d(w,p))$ is \emph{not} an $M$-ideal in its bidual $\mathcal{L}(d(w,p))$. If $1<p\le q<\infty$, then $\mathcal{K}(l_p(\Gamma), d(w,q))$ \emph{is} an $M$-ideal in $\mathcal{L}(l_p(\Gamma),d(w,q))$ by \cite{O1} (see, e.g., \cite[p.~53 or p.~66]{O4}). However, in \cite{O3} (see, e.g., \cite[p.~73 and p.~77]{O4}), it is proved that for $1<q<p<\infty$ and an infinite set $\Gamma$, $\mathcal{K}(l_p(\Gamma),d(w,q))$ is \emph{not} an $M$-ideal in $\mathcal{L}(l_p(\Gamma),d(w,q))$ whenever $w\in l_{\frac{p}{p-q}}$, neither is $\mathcal{K}(d(v,p)^*,d(w,q))$ in $\mathcal{L}(d(v,p)^*,d(w,q))$, $1<p,q<\infty$, whenever $p>(p-1)q$ and $d(v,p)^*$ (which is a sequence space) is contained as a linear subspace in $d(w,q)$.

\subsection*{Acknowledgements}

This research was partially supported by institutional research funding IUT20-57 of the Estonian Ministry of Education and Research and by the German Academic Exchange Service (DAAD).

\bibliography{HWW}

\end{document}